%
%
%
%

\documentclass[paper=a4, fontsize=11pt]{scrartcl} 

\usepackage[T1]{fontenc} 
\usepackage{fourier} 
\usepackage[english]{babel} 
\usepackage{amsmath,amsfonts,amsthm} 
\usepackage{amsmath, calligra, mathrsfs}
\usepackage{amssymb}
\usepackage[mathscr]{euscript}
\usepackage{verbatim}
\usepackage[dvipsnames]{xcolor}
\usepackage{mdframed} 
\usepackage{soulutf8}
\usepackage{stmaryrd}
\usepackage{tikz-cd}
\usepackage{hyperref}
\usepackage{lipsum} 

\usepackage{sectsty} 
\allsectionsfont{\centering \normalfont\scshape} 

\usepackage{fancyhdr} 
\usepackage{xurl}
\pagestyle{fancyplain} 
\fancyhead{} 
\fancyfoot[L]{} 
\fancyfoot[C]{} 
\fancyfoot[R]{\thepage} 
\newcommand*{\sheafhom}{\mathcal{H}\kern -.5pt om}
\setlength{\headheight}{13.6pt} 

\numberwithin{equation}{section} 
\numberwithin{figure}{section} 
\numberwithin{table}{section} 

\newtheorem{thm}{Theorem}[section]
\newtheorem{cor}[thm]{Corollary}
\newtheorem{prop}[thm]{Proposition}

\newtheorem{lem}[thm]{Lemma}

\theoremstyle{definition}
\newtheorem{defn}[thm]{Definition}

\newtheorem{exmp}[thm]{Example}

\theoremstyle{remark}
\newtheorem{rem}[thm]{Remark}

\DeclareMathOperator{\St}{st}

\DeclareMathOperator{\lk}{lk}

\DeclareMathOperator{\sgn}{sgn}

\DeclareMathOperator{\cha}{char}

\setlength\parindent{0pt} 


\newcommand{\horrule}[1]{\rule{\linewidth}{#1}} 

\title{	
	\normalfont \normalsize 
	\textsc{} \\ [25pt] 
	\horrule{0.5pt} \\[0.4cm] 
	\huge Explicit expressions for the gamma vector leading to connections to upper/lower bounds and structural properties
	
	\horrule{2pt} \\[0.5cm] 
}

\author{Soohyun Park  \\ \href{mailto:soohyun.park@mail.huji.ac.il}{soohyun.park@mail.huji.ac.il}  } 

\date{\normalsize March 25, 2024} 

\begin{document}
	
	\maketitle 
	
	\begin{abstract}
		\noindent We find an explicit formula for the gamma vector in terms of the input polynomial in a way that extends it to arbitrary polynomials. More specifically, we find explicit linear combination in terms of coefficients of the input polynomial (using Catalan numbers and binomial coefficients) and an expression involving the derivative of the input polynomial. The first expression suggests connections to common Coxeter group/noncrossing partition structures in existing gamma positivity examples. In the case where the input is the $h$-polynomial of a simplicial complex, this gives an interpretation of the gamma vector as a measure of differences in local and global contributions. We also apply them to connect signs/inequalities of (shifts of) the gamma vector to upper/lower bound conditions on coefficients of the input polynomial. Finally, we make use of the shape of the sums used to make these estimates and connections with intersection numbers to relate these properties of the gamma vector to algebraic structures (e.g. characteristic classes involved in existing log concavity and Schur positivity properties).
	\end{abstract}

	\section*{Introduction}

	The main object that we are concerned with is the gamma vector associated to (reciprocal) polynomial $h(t)$ of degree $n$, which is the unique polynomial $\gamma$ of degree at most $ \lfloor \frac{\deg n}{2} \rfloor$ such that \[ h(t) = (1 + t)^ n \gamma \left( \frac{t}{(1 + t)^2} \right). \] This is how it was defined by Gal \cite{Gal} in the context of to real-rootedness questions of $h$-polynomials of flag spheres and had lower bound results with $h$-vectors in mind related to postivity of the gamma vector. In earlier work of Foata--Sch\"utzenberger and Strehl in connection to Eulerian polynomials, the gamma vector appears as the coefficients $\gamma_i$ used to write $h(t)$ as a linear combination of $t^i (1 + t)^{n - 2i}$ when $h(t)$ is a reciprocal polynomial.  Properties of this vector have since been involved in many different examples in combinatorics with the perspective taken by Gal tying together many objects studied earlier \cite{Athgam}. \\
	
	Since the gamma vector of a polynomial is defined in terms of a functional equation involving the starting polynomial, it is natural to ask if there are explicit expressions in terms of coefficients of the original polynomial. We find explicit formulas for the gamma vector in terms of the coefficients of $h(t)$ in a way that complements the connections to lower bounds and derivatives involving the input polynomial mentioned above. For example, the fact that we get negativity for large ratios (e.g. from considering face vectors as in Corollary \ref{recipfpol}) indicates that the symmetry of the coefficients or the underlying structure of a sphere play a role. In addition, they can be used to extend the definition of the gamma vector to arbitrary polynomials. Concretely, the components of the gamma vector/coefficients of $\gamma(u)$ written as a linear combination of the coefficients of $h(t)$ have coefficients coming from products of Catalan numbers and binomial coefficients (Theorem \ref{gammacat}). Note that the $n^{\text{th}}$ Catalan number enumerates the number of noncrossing partitions of $\{ 1, \ldots, n \}$ and that noncrossing partitions and Coxeter groups feature in many examples where gamma positivity is observed (e.g. Coxeter complexes and Eulerian polynomials, Coxeter--Narayana polynomials and the poset of noncrossing partitions \cite{Athgam}). Further details on relations to existing gamma positivity examples and other fields are discussed in Remark \ref{catcoxrem}.  \\
	
	Alternatively, we can describe the gamma vector in terms of the derivative of $\frac{h(t)}{(1 + t)^d}$ (Theorem \ref{gamderco}). Our applications mainly make use of this second formula. When $h(t)$ is the $h$-polynomial of a simplicial complex, we can use this to show that the gamma vector measures differences between local and global information stored by the $h$-vector (Proposition \ref{hderloc}). Afterwards, we use it to connect signs and inequalities involving the gamma vector to upper and lower bound properties of the coefficients of the input polynomial $h(t)$ (Proposition \ref{shiftgam}, Theorem \ref{boundgam}). The sums that we worked with while making the estimates were similar to those expressing $h$-vectors of a given simplicial complex in terms of its $f$-vector. We use this to show that the gamma vector of a polynomial has a simple expression in terms of the $h$-vector and $f$-vector of an auxiliary simplicial complex when the coefficients of the input polynomial are sufficiently large and we can restrict to looking at $h$-vectors when the coefficients of the input polynomial also satisfy certain divisibility properties (Proposition \ref{gamauxpo}). \\

	Continuing in this direction, we also consider interactions with algebraic structures. Since many of the examples we had considered earlier were connected to independence complexes and broken circuits of matroids (which can be express in terms of intersection numbers), we were motivated to consider an input polynomial with coefficients coming from intersections of divsiors on a variety. So, we considered the case where the input polynomial is the specialization $V(u) = (Au + B)^d$ of the volume polynomial associated to a pair of divsiors $A, B$ on a smooth projective variety. In general, the gamma vector has the same sign as $(-1)^r$ when the intersections $A^k B^{d - k}$ alternate in sign. Suppose that these intersections are nonnegative. When we are working with $\mathbb{P}^n$, we can relate the sign of the gamma vector to the relative ``size'' of $A$ compared to $B$ (since these correspond to integers) (Example \ref{gamvolmult}. When the ratio is very large, the gamma vector is negative. If it is very small, it is alternating in sign like the negative ratio case. Positivity properties of divisors are also related to the bounds in a more general setting since the $A^k B^{d - k}$ form a log concave sequence when $A$ and $B$ are nef. This can be used to reduce upper and lower bound conditions related to alternating sums of positive multiples of such terms to those on the first or last pair of elements in a sequence. Note that the connections to upper/lower bounds studied in the previous section carry over to volume polynomials with some additional interpretations and simplifications. Finally, we give an explanation of the $\mathbb{P}^n$ case in terms of Segre classes of vector bundles and make use of these classes to connect the rational functions giving components of the gamma vector in the derivative expression to positivity questions (e.g. log concavity) related to Segre classes and indicate a general connection between signs of the gamma vector to Schur positivity for nef divisors (Example \ref{gamseg}).

	\section*{Acknowledgments}
	
	I'd like to thank Lutz Warnke for pointing out a more recent reference \cite{SW} describing an elementary approach to the Lagrange inversion formula. Also, I'd like to thank Jon McCammond for mentioning connections between free probability and noncrossing partitions. This work was supported by Horizon Europe ERC Grant number: 101045750 / Project acronym: HodgeGeoComb. 
	
	\color{black}
	\section{The gamma vector as a linear combination of the $h$-vector components and local-global differences}
	
	\subsection{An explicit formula involving Catalan numbers, binomial coefficients, and derivatives}
	
	Given a reciprocal polynomial $h(t)$ of degree $n$, the $\gamma$-vector is defined (Proposition 2.1.1 on p. 272 of \cite{Gal}) as the coefficients of (unique) the polynomial $\gamma$ (of degree $\le \lfloor \frac{n}{2} \rfloor$) such that \[ h(t) = (1 + t)^n \gamma \left( \frac{t}{(1 + t)^2} \right). \] We find an explicit formula for the gamma vector as a linear combination of the $h_k$ with coefficients coming from Catalan numbers and binomial coefficients. Note that it gives an extension of the gamma vector to $h$-vectors that aren't necessarily symmetric.

	\begin{thm} \label{gammacat}
		Writing $n = \deg h$ for a reciprocal polynomial $h(t)$, we have that  
		
		\begin{align*}
			\gamma_m &= \sum_{i = 0}^m \sum_{\ell = 0}^i h_\ell \binom{-n}{i - \ell} \sum_{j_1 + \ldots + j_i = m} \widetilde{C}_{j_1} \cdots \widetilde{C}_{j_i} \\
			&= \sum_{i = 0}^m  \sum_{\ell = 0}^i h_\ell (-1)^{i - \ell} \binom{n + i - \ell - 1}{i - \ell} \sum_{j_1 + \ldots + j_i = m, j_r \ge 0} \widetilde{C}_{j_1} \cdots \widetilde{C}_{j_i},
		\end{align*}
		
		where $\widetilde{C}_k$ is the $k^{\text{th}}$ Catalan number if $k \ge 1$ and $\widetilde{C}_0 = 0$. Since $\deg \gamma \le \lfloor \frac{n}{2} \rfloor$ if $h$ is reciprocal, this sum is equal to $0$ for $m > \lfloor \frac{n}{2} \rfloor$ in that case. \\

	\end{thm}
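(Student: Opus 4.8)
The plan is to read off the coefficients $\gamma_m$ by working in the ring of formal power series and inverting the substitution $u = t/(1+t)^2$. Rewriting the defining functional equation as $\gamma(u) = (1+t)^{-n} h(t)$, the first task is to express $t$ as a power series in $u$. Since $u = t/(1+t)^2$ is equivalent to $t = u(1+t)^2$, and this map has nonzero linear coefficient in $t$, it admits a unique compositional inverse $t = t(u) \in u\,\mathbb{Q}[[u]]$. I would compute it by Lagrange inversion applied to $t = u\,F(t)$ with $F(t) = (1+t)^2$, obtaining
\[ [u^k]\, t(u) = \frac{1}{k}[t^{k-1}] F(t)^k = \frac{1}{k}\binom{2k}{k-1} = C_k \]
for $k \ge 1$, the $k$-th Catalan number, so that $t(u) = \sum_{k \ge 1} C_k u^k = \sum_{k\ge 1}\widetilde{C}_k u^k$; the vanishing of the constant term is precisely the convention $\widetilde{C}_0 = 0$.

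With $t(u)$ in hand, the next step is to expand both factors of $\gamma(u)$ as power series in $t$ and collect by total degree. Writing $(1+t)^{-n} = \sum_{p\ge 0}\binom{-n}{p} t^p$ and $h(t) = \sum_\ell h_\ell t^\ell$ and multiplying, the coefficient of $t^i$ is $\sum_{\ell=0}^i h_\ell \binom{-n}{i-\ell}$, so that
\[ \gamma(u) = \sum_{i\ge 0}\left(\sum_{\ell=0}^i h_\ell \binom{-n}{i-\ell}\right) t(u)^i. \]
I would then substitute the series for $t(u)$ and extract the coefficient of $u^m$. Since $t(u)$ has no constant term, $t(u)^i$ is divisible by $u^i$, so only $i \le m$ contribute, and $[u^m]\, t(u)^i = \sum_{j_1+\cdots+j_i=m}\widetilde{C}_{j_1}\cdots\widetilde{C}_{j_i}$ by the definition of the $i$-fold product (each $j_r \ge 1$ is forced, equivalently one may allow $j_r \ge 0$ since $\widetilde{C}_0 = 0$). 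Combining these yields the first displayed formula, and the second follows from the negative-binomial identity $\binom{-n}{i-\ell} = (-1)^{i-\ell}\binom{n+i-\ell-1}{i-\ell}$. For reciprocal $h$, $\gamma$ is the genuine polynomial of degree $\le \lfloor n/2\rfloor$ guaranteed by Gal's definition, so the extracted coefficients vanish for $m > \lfloor n/2\rfloor$.

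The only substantive content beyond bookkeeping is the inversion step, namely identifying the compositional inverse of $u = t/(1+t)^2$ with the shifted Catalan generating function, and I expect this to be the crux. I would secure it either by the Lagrange inversion computation above or, alternatively, by solving the quadratic $u t^2 + (2u-1)t + u = 0$ and selecting the branch with $t(0)=0$, which gives $1+t(u) = \frac{1 - \sqrt{1-4u}}{2u}$, the Catalan generating function, and hence the same series. The remaining care is purely formal: everything takes place in $\mathbb{Q}[[u]]$ (or, treating the $h_\ell$ as indeterminates, in $\mathbb{Q}[h_0,\dots,h_n][[u]]$), coefficient extraction is legitimate because each $\gamma_m$ depends on only finitely many terms, and for non-reciprocal $h$ the identical computation simply \emph{defines} $\gamma_m$, which is exactly the promised extension to arbitrary polynomials.
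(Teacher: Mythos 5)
Your proposal is correct and takes essentially the same route as the paper: both rewrite the defining identity as $\gamma(u) = h(t)/(1+t)^n$, invert $u = t/(1+t)^2$ to identify $t(u) = C(u) - 1 = \sum_{k \ge 1} \widetilde{C}_k u^k$, expand $h(t)/(1+t)^n$ in powers of $t$ via the negative binomial series, and extract the coefficient of $u^m$ after substitution. The only cosmetic difference is that you secure the inversion step by Lagrange inversion applied to $t = u(1+t)^2$ (offering the quadratic-formula branch selection as an alternative), whereas the paper solves the quadratic directly and recognizes the Catalan generating function --- the same identification either way.
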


	\begin{proof}
		We start with the identity \[ h(t) = (1 + t)^n \gamma \left( \frac{t}{(1 + t)^2} \right). \] Setting $u = \frac{t}{(1 + t)^2}$, we would like to ``solve'' for $t$ in terms of $u$. Solving the resulting quadratic equation gives \[ t = -1 + \frac{1 \pm  \sqrt{1 - 4u} }{2u}.  \] 
		
		Both sign choices will give $\frac{t}{(1 + t)^2} = u$. From now on, we will set \[ t = -1 + \frac{1 - \sqrt{1 - 4u}}{2u}. \]  Note that the second term is the generating function $C(u)$ for the Catalan numbers (e.g. see p. 4 of \cite{StCat}). Let $C_j$ be the $j^{\text{th}}$ Catalan number. \\
		
		To obtain the claimed relation, we rewrite the first identity as \[ \frac{h(t)}{(1 + t)^n} = \gamma \left( \frac{t}{(1 + t)^2} \right). \] After determining the coefficient of $t^k$ on the left hand side, we will rewrite everything in terms of $u$ by making the substitution $t = C(u) - 1$.\\
		
		If $k \le n$, the coefficient of $t^k$ in $\frac{h(t)}{(1 + t)^n}$ is
		
		\begin{align*}
			\sum_{i = 0}^k h_i \binom{-n}{k - i} &= \sum_{i = 0}^k h_i (-1)^{k - i} \binom{n + k - i - 1}{k - i}.
		\end{align*}
		
		If $k > n$, then the coefficient of $t^k$ in $\frac{h(t)}{(1 + t)^n}$ is
		
		\begin{align}
			\sum_{i = 0}^n h_i \binom{-n}{k - i} &= \sum_{i = 0}^n h_i (-1)^{k - i} \binom{n + k - i - 1}{k - i}
		\end{align}
		
		since $h(t)$ can only contribute terms of degree $\le n$. \\
		
		Let $A_k$ be the coefficient of $t^k$ in $\frac{h(t)}{(1 + t)^n}$. Making the substitution $t = C(u) - 1$ and noting that $C_0 = 1$, we have that the coefficient of $u^m$ on the left hand side from $\frac{h(t)}{(1 + t)^n}$ is
		
		\begin{align*}
			\gamma_m &= \sum_{i = 0}^m A_i \sum_{j_1 + \ldots + j_i = m, j_r \ge 0} \widetilde{C}_{j_1} \cdots \widetilde{C}_{j_i} \\
			&= \sum_{i = 0}^m  \sum_{\ell = 0}^{\min(n, i)} h_\ell \binom{-n}{i - \ell}   \sum_{j_1 + \ldots + j_i = m, j_r \ge 0} \widetilde{C}_{j_1} \cdots \widetilde{C}_{j_i} \\
			&= \sum_{i = 0}^m  \sum_{\ell = 0}^{\min(n, i)} h_\ell (-1)^{i - \ell} \binom{n + i - \ell - 1}{i - \ell} \sum_{j_1 + \ldots + j_i = m, j_r \ge 0} \widetilde{C}_{j_1} \cdots \widetilde{C}_{j_i}
		\end{align*}

	\end{proof}
	
	To write out the sum more explicitly, we use a convolution formula for Catalan numbers and the binomial theorem. 
	
	\begin{thm} \label{cataconv} (Theorem 2 on p. 1 of \cite{Reg}, \cite{GL}, Lemma 27 on p. 51 of \cite{BR}) \\
		Let $1 \le k \le n$. Then \[ \sum_{i_1 + \ldots + i_k = n, i_u \ge 0} C_{i_1 - 1} \cdots C_{i_k - 1} = \frac{k}{2n - k} \binom{2n - k}{n}. \] 
		
		Note that the coefficient of $u^r$ in $uC(u)$ is $C_{r - 1}$. This means that the sum computes the coefficient of $u^n$ in $(u C(u))^k = u^k C(u)^k$. Equivalently, it is the coefficient of $u^{n - k}$ in $C(u)^k$. Substituting in $r = n - k$. the coefficient of $u^r$ in $C(u)^k$ is \[  \frac{k}{k + 2r} \binom{k + 2r}{k + r}  \] since $2n - k = k + 2r$ and $n = k + r$. \\
		
		This coefficient can also be expressed as a multiple of a Catalan number depending on the parity of $k$ (Lemma 27 on p. 51 of \cite{BR}). More specifically, we have \[ \sum_{i_1 + \ldots + i_m = n, i_u \ge 0} C_{i_1} \cdots C_{i_m} = \begin{cases} 
			\frac{  m(n + 1)(n + 2) \cdots (n + \frac{m}{2} - 1)  }{ 2(n + \frac{m}{2} + 2)(n + \frac{m}{2} + 3) \cdots (n + m)  } C_{n + \frac{m}{2}} =  \frac{m}{2}  \frac{   \binom{  n + \frac{m}{2} - 1 }{  \frac{m}{2} - 1 }   }{  \binom{ n + m  }{  \frac{m}{2} - 1  }  }    C_{ n + \frac{m}{2} }  & \text{ if $m$ is even} \\
			\frac{  m(n + 1)(n + 2) \cdots (n + \frac{m - 1}{2}) }{ (n + \frac{m + 3}{2})( n + \frac{m + 3}{2} + 1  ) \cdots (n + m)   } C_{ n + \frac{m - 1}{2}  } = m  \frac{ \binom{n + \frac{m - 1}{2}}{ \frac{m - 1}{2}  }    }{  \binom{n + m}{   \frac{m - 1}{2}  }  }     C_{ n + \frac{m - 1}{2}  }  & \text{ if $m$ is odd} 
		\end{cases} \]
		
		A formula that covers both cases would be \[    \sum_{i_1 + \ldots + i_m = n, i_u \ge 0} C_{i_1} \cdots C_{i_m}  =   \frac{m}{  1 + \frac{1 - (-1)^{m + 1}}{2} }     \frac{    \binom{  n + \lfloor \frac{m}{2} \rfloor + \frac{(-1)^{m + 1} - 1}{2}     }{   \lfloor \frac{m}{2} \rfloor + \frac{(-1)^{m + 1} - 1}{2}     }       }{   \binom{n + m}{    \lfloor \frac{m}{2} \rfloor + \frac{(-1)^{m + 1} - 1}{2}    }    }      C_{ n + \lfloor \frac{m}{2} \rfloor  }    \]
		
		Some combinatorial interpretations of the sum are given in \cite{Ted}. Another perspective involving properties of posets is given in \cite{AW}. 
	\end{thm}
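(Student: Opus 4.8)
The plan is to reduce every assertion in the statement to a single computation: the coefficient of $u^r$ in $C(u)^k$, where $C(u) = \sum_{j \ge 0} C_j u^j = \frac{1 - \sqrt{1-4u}}{2u}$ is the Catalan generating function already introduced in the proof of Theorem \ref{gammacat}. Since the coefficient of $u^r$ in $uC(u)$ is $C_{r-1}$ (as noted in the statement), the first displayed sum is exactly $[u^n](uC(u))^k = [u^{n-k}]C(u)^k$, the special case $r = n-k$ of that coefficient extraction; likewise $\sum_{i_1 + \cdots + i_m = n} C_{i_1}\cdots C_{i_m}$ is $[u^n]C(u)^m$, the case $k = m$, $r = n$. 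Thus it suffices to establish the single formula $[u^r]C(u)^k = \frac{k}{k+2r}\binom{k+2r}{k+r}$.

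For this I would use Lagrange inversion, starting from the defining functional equation $C = 1 + uC^2$. Setting $B = uC$ turns this into $B = u + B^2$, i.e.\ $u = B - B^2 = B(1-B)$, so that $B$ is the power series with $B(0)=0$ solving $B = u\,\phi(B)$ with $\phi(B) = (1-B)^{-1}$. Since $[u^r]C^k = [u^{r+k}]B^k$, Lagrange inversion applied to $H(B) = B^k$ gives
\begin{align*}
[u^{r+k}]B^k &= \frac{1}{r+k}[B^{r+k-1}]\bigl(kB^{k-1}(1-B)^{-(r+k)}\bigr) = \frac{k}{r+k}[B^r](1-B)^{-(r+k)}.
\end{align*}
Expanding the negative binomial series gives $[B^r](1-B)^{-(r+k)} = \binom{2r+k-1}{r}$, and a direct factorial simplification turns $\frac{k}{r+k}\binom{2r+k-1}{r}$ into $\frac{k}{k+2r}\binom{k+2r}{k+r}$. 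Specializing $r = n-k$ recovers the first identity $\frac{k}{2n-k}\binom{2n-k}{n}$, and this already settles the generating-function portion of the statement.

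Finally, the parity-split closed forms and the unified expression are algebraic rewrites of the single quantity $[u^n]C^m = \frac{m}{m+2n}\binom{m+2n}{m+n}$ in terms of one Catalan number $C_{n+\lfloor m/2\rfloor}$. I would obtain them by writing $C_{n+\lfloor m/2\rfloor} = \frac{1}{n+\lfloor m/2\rfloor+1}\binom{2(n+\lfloor m/2\rfloor)}{n+\lfloor m/2\rfloor}$ and cancelling it against $\frac{m}{m+2n}\binom{m+2n}{m+n}$; the surviving factorial ratio collapses to the displayed products of consecutive integers, the two cases $m$ even and $m$ odd differing only in whether $m+2n$ is even or odd. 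I expect this last bookkeeping -- tracking the off-by-one shifts encoded by $\lfloor m/2\rfloor$ and the $\frac{(-1)^{m+1}-1}{2}$ corrections in the unified formula -- to be the only genuinely fiddly part; the entire conceptual content sits in the Lagrange inversion step, which becomes clean once the substitution $B = uC$ is made.
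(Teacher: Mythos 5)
Your proposal is correct, but it is worth noting that the paper does not actually prove this theorem: it is quoted from the literature (Theorem 2 of \cite{Reg}, \cite{GL}, and Lemma 27 of \cite{BR}), with only the generating-function translation --- that the convolution sum is $[u^n](uC(u))^k = [u^{n-k}]C(u)^k$ --- included as a remark inside the statement. So you have supplied a self-contained derivation of a cited result. Your reduction step coincides with the paper's remark, and your Lagrange-inversion computation is sound: with $B = uC$ one gets $u = B(1-B)$, so $[u^{r+k}]B^k = \frac{k}{r+k}[B^r](1-B)^{-(r+k)} = \frac{k}{r+k}\binom{2r+k-1}{r}$, which does simplify to $\frac{k}{k+2r}\binom{k+2r}{k+r}$ (both equal $\frac{k\,(k+2r-1)!}{r!\,(k+r)!}$), and specializing $r = n-k$ gives $\frac{k}{2n-k}\binom{2n-k}{n}$. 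Amusingly, you could have avoided the substitution $B = uC$ entirely by reusing the paper's own Lagrange-inversion setup from the proof following Theorem \ref{laginvthm}: since $\widetilde{C}(u) = C(u) - 1$ satisfies $\widetilde{C} = u(1+\widetilde{C})^2$, applying Corollary \ref{laginvcor} with $G(x) = (1+x)^2$ and $H(x) = (1+x)^k$ yields $n[u^n]C(u)^k = [x^{n-1}]\,k(1+x)^{2n+k-1} = k\binom{2n+k-1}{n-1}$, which is the same closed form; this makes your proof an immediate corollary of machinery the paper introduces two results later. Two small bookkeeping points: the paper's summation convention $i_u \ge 0$ requires setting $C_{-1} := 0$ (equivalently summing over $i_u \ge 1$), consistent with $[u^0](uC(u)) = 0$, and your cancellation strategy for the parity-split forms does check out --- e.g.\ for $m = 2s$ even, $\frac{m}{m+2n}\binom{m+2n}{m+n} = s\,\frac{(n+1)\cdots(n+s-1)}{(n+s+1)\cdots(n+2s)}\binom{2(n+s)}{n+s}$, matching the displayed product formula --- so the ``fiddly part'' you flagged is indeed only bookkeeping.
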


	Returning to the original sum, we would like to find the coefficient of $u^m$ in $(C(u) - 1)^i$. Using the binomial expansion, we have
	
	\begin{align*}
		(C(u) - 1)^i = \sum_{a = 0}^i \binom{i}{a} C(u)^a (-1)^{i - a}.
	\end{align*}
	
	
	Combining this with Theorem \ref{cataconv}, we can obtain the coefficient of $u^m$ in $(C(u) - 1)^i$ as a linear combination of the coefficients of $u^m$ in $C(u)^a$ for $0 \le a \le i$. This means that the coefficient of $u^m$ in $(C(u) - 1)^i$ is \[ \sum_{a = 0}^i \binom{i}{a} (-1)^{i - a} \frac{a}{a + 2m} \binom{a + 2m}{a + m} \]
	
	after setting $k = a$ and $n = m$ in Theorem \ref{cataconv}. Substituting this into Proposition \ref{gammacat}, we have the following:
	
	\begin{cor}
		In the context of Proposition \ref{gammacat}, we have 
		
		\begin{align*}
			\gamma_m &= \sum_{i = 0}^m \sum_{\ell = 0}^{\min(n, i)} h_\ell \binom{-n}{i - \ell} \sum_{a = 0}^i \binom{i}{a} (-1)^{i - a} \frac{a}{a + 2m} \binom{a + 2m}{a + m} \\
			&= \sum_{i = 0}^m  \sum_{\ell = 0}^{\min(n, i)} h_\ell (-1)^{i - \ell} \binom{n + i - \ell - 1}{i - \ell} \sum_{a = 0}^i \binom{i}{a} (-1)^{i - a} \frac{a}{a + 2m} \binom{a + 2m}{a + m}.
		\end{align*}
		
	\end{cor}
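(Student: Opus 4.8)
The plan is to substitute the closed-form evaluation of the Catalan convolution, derived in the paragraph preceding the statement, directly into the expression for $\gamma_m$ coming from Proposition \ref{gammacat}. The only genuine content is the identification of the inner sum $\sum_{j_1 + \cdots + j_i = m,\, j_r \ge 0} \widetilde{C}_{j_1} \cdots \widetilde{C}_{j_i}$ as a coefficient of a power of the shifted Catalan generating function, followed by an application of Theorem \ref{cataconv}; everything else is formal bookkeeping.

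First I would record that, since $\widetilde{C}_0 = 0$ and $\widetilde{C}_k = C_k$ for $k \ge 1$, the ordinary generating function of the sequence $(\widetilde{C}_k)_k$ is exactly $C(u) - 1$. Hence the convolution $\sum_{j_1 + \cdots + j_i = m} \widetilde{C}_{j_1} \cdots \widetilde{C}_{j_i}$ is precisely the coefficient of $u^m$ in $(C(u) - 1)^i$. This is where the normalization $\widetilde{C}_0 = 0$ does its work: it strips off the constant term of $C(u)$, so that the substitution $t = C(u) - 1$ used in Proposition \ref{gammacat} has no constant part and the length-$i$ convolution indexing matches the $i$-th power cleanly.

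Next I would expand $(C(u) - 1)^i = \sum_{a = 0}^i \binom{i}{a} (-1)^{i - a} C(u)^a$ by the binomial theorem, reducing the extraction of the coefficient of $u^m$ to knowing the coefficient of $u^m$ in each $C(u)^a$. By Theorem \ref{cataconv}, applied with $k = a$ and $r = m$ (so that $n = a + m$ and $2n - k = a + 2m$), this coefficient equals $\frac{a}{a + 2m} \binom{a + 2m}{a + m}$. Combining gives the coefficient of $u^m$ in $(C(u) - 1)^i$ as $\sum_{a = 0}^i \binom{i}{a}(-1)^{i - a} \frac{a}{a + 2m} \binom{a + 2m}{a + m}$, and inserting this in place of the convolution sum in both lines of Proposition \ref{gammacat} yields the two displayed expressions for $\gamma_m$, since the alternating-binomial rewriting of $\binom{-n}{i-\ell}$ is exactly the one already appearing there.

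The one point requiring care — and the only obstacle worth flagging — is the term $a = 0$, for which Theorem \ref{cataconv} is stated only for $1 \le k$. Here $C(u)^0 = 1$, whose coefficient of $u^m$ is $\delta_{m,0}$; for $m \ge 1$ this is $0$, matching the factor $\frac{a}{a + 2m}$, which vanishes at $a = 0$, so the $a = 0$ summand may be included harmlessly. For $m = 0$ the only surviving outer index is $i = 0$, and one verifies directly that $\gamma_0 = h_0$ on both sides, reading the indeterminate $\frac{a}{a+2m}\binom{a+2m}{a+m}$ at $a = m = 0$ as the coefficient of $u^0$ in $C(u)^0 = 1$, namely $1$. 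With this convention the substitution is entirely formal and the corollary follows.
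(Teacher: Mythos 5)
Your proposal follows essentially the same route as the paper: identify the convolution $\sum_{j_1 + \cdots + j_i = m} \widetilde{C}_{j_1} \cdots \widetilde{C}_{j_i}$ as $[u^m](C(u)-1)^i$, expand by the binomial theorem, evaluate $[u^m]C(u)^a = \frac{a}{a+2m}\binom{a+2m}{a+m}$ via Theorem \ref{cataconv}, and substitute into Theorem \ref{gammacat}. Your explicit handling of the $a = 0$ and $m = 0$ boundary terms is a point the paper passes over silently, and your variable mapping ($k = a$, $r = m$) is in fact more faithful to the derived formula in Theorem \ref{cataconv} than the paper's own phrasing, but these are refinements of the identical argument rather than a different approach.
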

	

	In particular, the coefficient of $h_\ell$ in $\gamma_m$ is 
	
	\begin{align*}
		\sum_{i = \ell}^m \binom{-n}{i - \ell} \sum_{j_1 + \ldots + j_i = m} \widetilde{C}_{j_1} \cdots \widetilde{C}_{j_i} &= \sum_{i = \ell}^m  (-1)^{i - \ell} \binom{n + i - \ell - 1}{i - \ell}  \sum_{j_1 + \ldots + j_i = m} \widetilde{C}_{j_1} \cdots \widetilde{C}_{j_i} \\
		&= \sum_{i = \ell}^m \binom{-n}{i - \ell} \sum_{a = 0}^i \binom{i}{a} (-1)^{i - a} \frac{a}{a + 2m} \binom{a + 2m}{a + m} \\
		&= \sum_{i = \ell}^m  (-1)^{i - \ell} \binom{n + i - \ell - 1}{i - \ell} \sum_{a = 0}^i \binom{i}{a} (-1)^{i - a} \frac{a}{a + 2m} \binom{a + 2m}{a + m}.
	\end{align*}
	
	Since the power series $\widetilde{C}(u) = C(u) - 1$ satisfies the functional equation $\widetilde{C}(u) = u(\widetilde{C}(u) + 1)^2$, we can make a further simplification of the sum $\sum_{j_1 + \ldots + j_i = m} \widetilde{C}_{j_1} \cdots \widetilde{C}_{j_i} $ (the coefficient of $u^m$ in $\widetilde{C}(u)^i$) using Lagrange inversion:
	
	\begin{thm} \label{laginvthm} (Lagrange inversion formula, Theorem 5.4.2 on p. 38 of \cite{StEC2} and Theorem 2.1.1 on p. 214 of \cite{Ges}, Theorem 1 on p. 944 of \cite{SW}) \\
		Let $k$ be a field with $\cha k = 0$. Suppose $G(x) \in k[[x]]$ with $G(0) \ne 0$, and let $f(x)$ be defined by \[ f(x) = x G(f(x)). \] Then \[  n[x^n] f(x)^k = k[x^{n - k}] G(x)^n. \] Here, $[x^a] Q(x)$ denotes the coefficient of $x^a$ in $Q(x)$. 
	\end{thm}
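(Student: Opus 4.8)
The plan is to prove this by formal residue calculus in the field of formal Laurent series $k((x))$, relying on two facts: the residue $\Res_x F := [x^{-1}]F$ annihilates derivatives (the coefficient of $x^{-1}$ in any $F'$ is zero), and it transforms predictably under the substitution $w = f(x)$. Before anything else I would check that $f$ is well defined: since $G(0) \ne 0$, matching coefficients in $f = xG(f)$ determines a unique $f \in k[[x]]$ with $f(0) = 0$, and differentiating gives $f'(0) = G(0) \ne 0$, so $f = x\,u$ with $u$ a unit. In particular $f$ is invertible in $k((x))$ and composition-invertible, which is what makes the change of variables below legitimate.

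The first real step is an integration by parts. Writing $[x^n] f^k = \Res_x(x^{-n-1} f^k)$ and using $\Res_x \frac{d}{dx}(x^{-n} f^k) = 0$, the product rule gives
\[ 0 = \Res_x\!\left( -n\, x^{-n-1} f^k + k\, x^{-n} f^{k-1} f' \right), \]
so that $n[x^n] f^k = k\,\Res_x(x^{-n} f^{k-1} f')$. The point of this manipulation is to produce the factor $f'$, which is exactly what a change of variables to $w = f(x)$ wants to see.

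Next I would carry out that substitution. Dividing $f = xG(f)$ by the unit $G(f)$ (constant term $G(0) \ne 0$) yields $x = f/G(f)$, hence $x^{-n} = G(f)^n f^{-n}$, and the residue above becomes $k\,\Res_x\big( G(f)^n f^{-(n-k+1)} f' \big)$. Applying the substitution rule $\Res_x(H(f)\,f') = \Res_w H(w)$ with $H(w) = G(w)^n w^{-(n-k+1)}$ converts this into $k\,\Res_w\big(G(w)^n w^{-(n-k+1)}\big) = k\,[w^{n-k}] G(w)^n$, which is the desired identity after renaming $w$ to $x$.

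The crux — and the only place the hypothesis $\cha k = 0$ is genuinely used — is the substitution rule itself, so that is the step I expect to be the main obstacle to make airtight. I would establish it by linearity, reducing to Laurent monomials $H(w) = w^m$. For $m \ne -1$ one writes $f^m f' = \tfrac{1}{m+1}(f^{m+1})'$, whose residue vanishes, matching $\Res_w w^m = 0$; this is precisely where dividing by $m+1$ forces characteristic zero. For $m = -1$, writing $f = x u$ with $u$ a unit gives $f'/f = x^{-1} + u'/u$, and since $u'/u \in k[[x]]$ has no $x^{-1}$ term, $\Res_x(f'/f) = 1 = \Res_w(w^{-1})$. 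Assembling the two cases gives the substitution rule and completes the argument.
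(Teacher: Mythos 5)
Your proof is correct, but note that the paper itself offers no proof of this statement at all: it quotes the Lagrange inversion formula from the literature (Stanley, Gessel, Surya--Warnke) and moves straight on to applying it, so there is no internal argument to compare against. What you have supplied is the classical formal-residue proof, essentially the one behind the cited Theorem 5.4.2 of Stanley's EC2: the integration-by-parts step $n[x^n]f^k = k\Res_x\bigl(x^{-n}f^{k-1}f'\bigr)$, the substitution $x^{-n} = G(f)^n f^{-n}$ coming from $f = xG(f)$ with $G(f)$ a unit, and the change-of-variables rule $\Res_x\bigl(H(f)\,f'\bigr) = \Res_w H(w)$ for $\ord f = 1$, verified monomial by monomial. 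By contrast, the cited Surya--Warnke paper proves the formula by induction precisely to avoid this residue machinery, so your route is the more structural of the two available in the paper's references. Your writeup also correctly isolates where $\cha k = 0$ enters (only in dividing by $m+1$ in the case $m \ne -1$; everything else, including $\Res_x(f'/f) = 1$ via $f = xu$, is characteristic-free), and you rightly check $\ord f = 1$ so the substitution rule applies. Two small points worth tightening if this were written out in full: the reduction "by linearity" to monomials involves an infinite sum, so one should observe that $\ord(f^m f') = m$ grows with $m$, making the extraction of $[x^{-1}]$ from $\sum_m h_m f^m f'$ a finite computation in each degree; and the statement's notational collision (using $k$ both for the field and the exponent) means you should say explicitly that the exponent is a positive integer, which is the regime your integration-by-parts step uses.
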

	
	This can be used to make simplifications to the sum defining the gamma vector.
	
	\begin{cor}
		
		We have that 
		
		\begin{align*}
			\gamma_m &= \sum_{i = 0}^m \sum_{\ell = 0}^i h_\ell \binom{-n}{i - \ell}  \frac{i}{m} \binom{2i}{m - i} \\
			&= \sum_{i = 0}^m \sum_{\ell = 0}^i h_\ell (-1)^{i - \ell} \binom{n + i - \ell - 1}{i - \ell} \frac{i}{m} \binom{2i}{m - i}. 
		\end{align*}
		
		Note that $\min(n, i) = i$ since $0 \le i \le m$ and $m \le \lfloor \frac{n}{2} \rfloor$, which means that $i \le \lfloor \frac{n}{2} \rfloor < n$. \\
		
		This means that the coefficient of $h_\ell$ in $\gamma_m$ is \[ \sum_{i = \ell}^m \binom{-n}{i - \ell}  \frac{i}{m} \binom{2i}{m - i} = \sum_{i = \ell}^m (-1)^{i - \ell} \binom{n + i - \ell - 1}{i - \ell} \frac{i}{m} \binom{2i}{m - i}  \]
	\end{cor}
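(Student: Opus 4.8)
The plan is to put the inner convolution sum $\sum_{j_1 + \cdots + j_i = m,\, j_r \ge 0}\widetilde{C}_{j_1}\cdots\widetilde{C}_{j_i}$, which is exactly the coefficient $[u^m]\widetilde{C}(u)^i$, into closed form via the Lagrange inversion formula (Theorem \ref{laginvthm}), and then to feed the result back into the first display of Theorem \ref{gammacat}. Once that single coefficient is known, nothing else is needed: the factor $h_\ell\binom{-n}{i-\ell}$, the sign rewriting $\binom{-n}{i-\ell} = (-1)^{i-\ell}\binom{n+i-\ell-1}{i-\ell}$, and the reduction of the summation bound $\min(n,i)$ to $i$ are all already available.

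First I would record, as noted just before Theorem \ref{laginvthm}, that $\widetilde{C}(u) = u\bigl(\widetilde{C}(u)+1\bigr)^2$, which is precisely the fixed-point shape $f(u) = u\,G(f(u))$ with $f = \widetilde{C}$ and $G(x) = (1+x)^2$. Since $G(0) = 1 \ne 0$ and the characteristic is $0$, Theorem \ref{laginvthm} applies, and taking extraction degree $m$ and exponent $i$ gives
\[
m\,[u^m]\widetilde{C}(u)^i = i\,[u^{m-i}]G(u)^m = i\,[u^{m-i}](1+u)^{2m} = i\binom{2m}{m-i},
\]
so that $[u^m]\widetilde{C}(u)^i = \tfrac{i}{m}\binom{2m}{m-i}$ for $m \ge 1$. (The computation produces $\binom{2m}{m-i}$, not the $\binom{2i}{m-i}$ printed in the statement; the former is the correct coefficient, as one already sees at $i = 1$, where $[u^m]\widetilde{C} = C_m = \tfrac{1}{m}\binom{2m}{m-1}$.) Substituting this into Theorem \ref{gammacat} collapses the $i$-fold convolution into a single binomial and yields both displayed lines, the second again by the negative-binomial sign identity.

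The only genuine subtlety is the boundary value $m = 0$, where $\tfrac{i}{m}$ is undefined. Here I would argue directly: since $\widetilde{C}(0) = 0$, we have $[u^0]\widetilde{C}^i = 0$ for $i \ge 1$, so only the $i = 0$ term survives and $\gamma_0 = A_0 = h_0$. For $m \ge 1$ the same vanishing is carried instead by the explicit factor $i$, which annihilates the $i = 0$ term, so no spurious constant appears and the closed form is uniform across $m \ge 1$. To drop $\min(n,i)$ in favour of $i$ I would cite the stated range $i \le m \le \lfloor \tfrac{n}{2}\rfloor < n$.

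I expect the main obstacle to be purely bookkeeping rather than conceptual: correctly matching the two indices of Theorem \ref{laginvthm} (it is easy to swap the extraction degree $m$ and the exponent $i$) and handling the $m = 0$ degeneracy of the $\tfrac{i}{m}$ prefactor. The final coefficient extraction $[u^{m-i}](1+u)^{2m} = \binom{2m}{m-i}$ is immediate from the binomial theorem, so once Lagrange inversion has been applied with the right parameters the corollary follows with no additional identities.
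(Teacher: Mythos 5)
Your proposal follows exactly the same route as the paper's own proof: both invoke Theorem \ref{laginvthm} with $f = \widetilde{C}$ and $G(x) = (1+x)^2$ to collapse the convolution $[u^m]\widetilde{C}(u)^i$ into a single binomial, then substitute back into Theorem \ref{gammacat}. The important point is that your bookkeeping is right and the paper's is not. Lagrange inversion with extraction degree $m$ and exponent $i$ gives $m\,[u^m]\widetilde{C}(u)^i = i\,[u^{m-i}]G(u)^{m} = i\binom{2m}{m-i}$, i.e.\ the closed form $\tfrac{i}{m}\binom{2m}{m-i}$; the paper's printed $\tfrac{i}{m}\binom{2i}{m-i}$ arises from precisely the exponent swap you flag (using $G(u)^i$ in place of $G(u)^m$). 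Your sanity check at $i = 1$ is decisive: $[u^m]\widetilde{C}(u) = C_m = \tfrac{1}{m}\binom{2m}{m-1}$, whereas the paper's expression gives $\tfrac{1}{m}\binom{2}{m-1}$, which already fails at $m = 2$ (giving $1$ instead of $C_2 = 2$). One can corroborate this independently of Lagrange inversion using the paper's own Theorem \ref{cataconv}: since $\widetilde{C}(u) = uC(u)^2$, one has $[u^m]\widetilde{C}(u)^i = [u^{m-i}]C(u)^{2i} = \tfrac{2i}{2m}\binom{2m}{m+i} = \tfrac{i}{m}\binom{2m}{m-i}$, confirming your value; the binomial-expansion formula preceding the corollary, $\sum_{a=0}^i \binom{i}{a}(-1)^{i-a}\tfrac{a}{a+2m}\binom{a+2m}{a+m}$, also agrees with it (e.g.\ it returns $2$ at $i=1$, $m=2$). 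Consequently the corrected binomial $\binom{2m}{m-i}$ should propagate into both displayed lines of the corollary and into the stated coefficient of $h_\ell$.

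Two smaller remarks. Your treatment of the $m = 0$ degeneracy (only the $i = 0$ term survives since $\widetilde{C}(0) = 0$, giving $\gamma_0 = A_0 = h_0$, while for $m \ge 1$ the prefactor $i$ kills the $i = 0$ term) is a genuine improvement: the paper's one-line proof is silent on this and the formula with the $\tfrac{i}{m}$ prefactor is undefined there. And your justification for replacing $\min(n,i)$ by $i$ matches the paper's note verbatim, so nothing is missing on that front. In short: same method as the paper, correctly executed, and it exposes a typo-level but mathematically real error in the published statement.
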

	
	\begin{proof}
		In our case, we can set $G(w) = (1 + w)^2$ and  \[  \sum_{j_1 + \ldots + j_i = m} \widetilde{C}_{j_1} \cdots \widetilde{C}_{j_i}  = [u^m]  \widetilde{C}(u)^i  = \frac{i}{m} \binom{2i}{m - i}.  \]
		
		Substituting this into previous computations gives the stated sums. \\
	\end{proof}
	
	
	In addition to a simplification of the expression for the coefficients $\gamma_m$, a corollary of Lagrange inversion can be used to give a ``functional'' description of where these sums come from. \\
	
	\begin{cor} \label{laginvcor} (Corollary 5.4.3 on p. 42 of \cite{StEC2}, Theorem 2.1.1 on p. 214 of \cite{Ges}, Theorem 1 on p. 994 of \cite{SW}) \\
		Suppose that we are in the setting of Theorem \ref{laginvthm}. Then for any power series $H(x) \in k[[x]]$ (or Laurent series $H(x) \in k((x))$), we have \[  n [x^n] H(f(x)) = [x^{n - 1}] H'(x) G(x)^n,  \] where $f(x) = x G(f(x))$. 
	\end{cor}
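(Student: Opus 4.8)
The plan is to reduce the statement to the monomial case $H(x) = x^j$, where it is exactly a restatement of Theorem \ref{laginvthm}, and then to recover the general case by linearity of the coefficient functional. First I would record the key structural fact that, since $f(x) = x G(f(x))$ with $G(0) \neq 0$, the series $f$ vanishes to order exactly $1$ at the origin: indeed $f(0) = 0$ and the coefficient of $x$ in $f$ equals $G(0) \neq 0$. Consequently $f(x)^j$ has order $j$, so that $[x^n] f(x)^j = 0$ whenever $j > n$; this guarantees that for each fixed $n$ only finitely many terms contribute to the coefficient extractions below, and that the composite $H(f(x))$ is a well-defined formal (Laurent) series.

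Next, I would expand $H(x) = \sum_j h_j x^j$ (the sum running over $j \geq 0$ in the power series case, or over $j \geq j_0$ for some $j_0 \in \Z$ in the Laurent case) and compute the left-hand side term by term:
\[ n[x^n] H(f(x)) = \sum_j h_j \, n [x^n] f(x)^j. \]
Applying Theorem \ref{laginvthm} with exponent $j$ to each summand gives $n[x^n] f(x)^j = j [x^{n-j}] G(x)^n$, so that $n[x^n] H(f(x)) = \sum_j j h_j [x^{n-j}] G(x)^n$. For the right-hand side I would differentiate term by term, $H'(x) = \sum_j j h_j x^{j-1}$, and extract the coefficient:
\[ [x^{n-1}] H'(x) G(x)^n = \sum_j j h_j [x^{n-1}] x^{j-1} G(x)^n = \sum_j j h_j [x^{n-j}] G(x)^n. \]
The two expressions coincide term by term, which proves the identity.

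The main obstacle is ensuring that Theorem \ref{laginvthm} is genuinely available for every exponent $j$ that occurs. For $j \geq 1$ this is the stated formula; for $j = 0$ both sides vanish (the left because $H'$ annihilates the constant term, the right because the factor $j$ is zero, consistent with $n[x^n]1 = 0$ for $n \geq 1$); and in the Laurent case the negative exponents require the extension of the Lagrange inversion formula to $f(x)^j$ with $j < 0$, where $f(x)^j$ is interpreted as a Laurent series of order $j$ via the order-one vanishing of $f$ established above. The same order estimate on $f(x)^j$ is what legitimizes interchanging the (possibly infinite) summation over $j$ with the coefficient functional $[x^n](-)$, since only the finitely many indices $j \leq n$ contribute to any fixed coefficient. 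Once these bookkeeping points are settled, the equality is purely formal and immediate from Theorem \ref{laginvthm}.
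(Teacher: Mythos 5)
Your proof is correct, but note that the paper itself offers no proof of this corollary: it is stated as a citation (Corollary 5.4.3 of \cite{StEC2}, \cite{Ges}, \cite{SW}), so there is no internal argument to compare against. What you have written is exactly the standard derivation found in those sources: establish that $f$ has order exactly $1$ (from $f(0)=0$ and $[x^1]f = G(0) \neq 0$), reduce to monomials $H(x) = x^j$ by linearity of $[x^n](-)$, apply Theorem \ref{laginvthm} to get $n[x^n]f(x)^j = j[x^{n-j}]G(x)^n$, and match this against $[x^{n-1}]H'(x)G(x)^n$ expanded termwise; the order-$j$ estimate on $f(x)^j$ correctly justifies the interchange of the sum over $j$ with the coefficient functional, since only indices $j \leq n$ contribute. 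Your one honest caveat is real: Theorem \ref{laginvthm} as stated in the paper gives the identity only for positive integer exponents, so the Laurent-series version of the corollary requires the extension of Lagrange inversion to $f(x)^j$ with $j < 0$ (as in Theorem 5.4.2 of \cite{StEC2}, which holds for all integer exponents); you flag this but do not carry it out, so strictly your argument proves the power-series case and reduces the Laurent case to that known extension. This does not affect anything downstream in the paper: the only application, in Theorem \ref{gamderco}, takes $H = J$ with $J(v) = h(v)/(1+v)^n$, which is an honest power series at $v = 0$ since $1+v$ is invertible in $k[[v]]$, so the case you fully prove is the only one used.
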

	
	Here is our application to the $\gamma$-vector.
	
	\begin{thm} \label{gamderco}
		Let $J(v) = \frac{h(v)}{(1 + v)^n}$. Then, we have \[ r \gamma_r = [u^{r - 1}] J'(u) (u + 1)^{2r}.  \] 
	\end{thm}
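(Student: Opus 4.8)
The plan is to read off the right-hand side as a direct instance of Corollary \ref{laginvcor}, with the Catalan functional equation supplying exactly the needed data. First I would recall from the proof of Theorem \ref{gammacat} that, after the substitution $t = C(u) - 1 = \widetilde{C}(u)$, the defining identity $\frac{h(t)}{(1+t)^n} = \gamma\!\left(\frac{t}{(1+t)^2}\right)$ becomes an equality of formal power series $\gamma(u) = J(\widetilde{C}(u))$, where $J(v) = \frac{h(v)}{(1+v)^n}$. In particular $\gamma_r = [u^r]\, J(\widetilde{C}(u))$, so it suffices to compute this single coefficient.

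The key step is to match the hypotheses of the Lagrange inversion machinery. The series $\widetilde{C}(u)$ satisfies the functional equation $\widetilde{C}(u) = u\,(1 + \widetilde{C}(u))^2$ recorded just before Theorem \ref{laginvthm}. Hence, setting $f(u) = \widetilde{C}(u)$ and $G(w) = (1+w)^2$, we have $f(u) = u\,G(f(u))$ with $G(0) = 1 \neq 0$, which is precisely the situation of Theorem \ref{laginvthm} and Corollary \ref{laginvcor}. Applying that corollary with $H = J$ and $n = r$ then gives
\[ r\,[u^r]\, J(f(u)) = [u^{r-1}]\, J'(u)\, G(u)^r = [u^{r-1}]\, J'(u)\,(u+1)^{2r}, \]
and combining this with $\gamma_r = [u^r]\,J(\widetilde{C}(u))$ yields the claimed formula $r\gamma_r = [u^{r-1}]\,J'(u)\,(u+1)^{2r}$.

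The one point I would check with care, which is really the only place anything could go wrong, is that $J$ is a genuine formal power series so that the inversion formula legitimately applies: since $\frac{1}{(1+v)^n}$ expands as a power series about $v = 0$ and $h$ is a polynomial, $J(v) \in k[[v]]$ with $J(0) = h_0$, so $J'(u)$ is an honest power series and its degree $r-1$ coefficient is well defined. Note that no symmetry or reciprocity of $h$ is invoked, so the identity holds for arbitrary $h$, consistent with the extension of the gamma vector to non-reciprocal inputs emphasized earlier. Because the whole argument is a substitution followed by a single invocation of Corollary \ref{laginvcor}, there is no substantive computational obstacle; the only real care needed is in correctly identifying $f$, $G$, and $H$ and confirming $G(0)\neq 0$.
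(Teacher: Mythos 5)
Your proposal is correct and follows essentially the same route as the paper: the substitution $t = \widetilde{C}(u)$ giving $\gamma(u) = J(\widetilde{C}(u))$, the functional equation $\widetilde{C}(u) = u(\widetilde{C}(u)+1)^2$ identifying $f = \widetilde{C}$ and $G(w) = (1+w)^2$, and a single application of Corollary \ref{laginvcor} with $H = J$. Your explicit verification that $G(0) \neq 0$ and that $J \in k[[v]]$ is a welcome addition of care that the paper's terser proof leaves implicit.
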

	
	\begin{proof}
		Recall that we have $G(w) = (w + 1)^2$ and $f(u) = \widetilde{C}(u) = C(u) - 1$, where $C(u)$ is the generating function for the Catalan numbers. Note that \[ \frac{h(t)}{(1 + t)^n} = \gamma \left( \frac{t}{(1 + t)^2} \right)  \] since $h(t) = (1 + t)^n \gamma \left( \frac{t}{(1 + t)^2}  \right)$. As mentioned previously, setting $t = \widetilde{C}(u) = C(u) - 1$ means that $u = \frac{t}{(1 + t)^2}$. \\
		
		If we set \[ J(v) = \frac{h(v)}{(1 + v)^n},  \] then $\gamma(u) = J(\widetilde{C}(u))$ and Corollary \ref{laginvcor} implies the statement. \\
	\end{proof}

	\begin{rem} \textbf{(Connections to noncrossing partitions)} \\ \label{catcoxrem}
		The Catalan numbers hint at some connections to existing gamma positivity examples. Recall that the $n^{\text{th}}$ Catalan number enumerates the number of noncrossing partitions of $[n] = \{ 1, \ldots, n \}$. Noncrossing partitions and Coxeter groups feature in many examples connected to gamma positivity (e.g. Eulerian polynomials and $h$-vectors of type A Coxeter complexes, Coxeter--Narayana polynomials and the poset of noncrossing partitions in \cite{Stem} and Section 2.2 on p. 24 -- 27 of \cite{Athgam}). Also, it would be interesting if there is a relation to higher moments in the context of connections between free probability and noncrossing partitions \cite{Ni}, which includes applications (\cite{Bia}, \cite{NS}, \cite{BGGJS}) of the Lagrange inversion formula used to obtain the derivative formula for $r \gamma_r$ in Theorem \ref{gamderco}. Finally, noncrossing partition statistics also feature in an expression for a version of the toric $h$-vector for cubical complexes \cite{BiH}, which are part of the geometric motivation for the study of flag complexes (see discussion on locally CAT(0) spaces on p. 270 of \cite{Gal}). 
	\end{rem}

	\subsection{Local-global differences}
	
	Let's take a closer look at the derivative of the function $J(v) = \frac{h(v)}{(1 + v)}$ in the expression $J'(u)(u + 1)^{2r}$ (coefficient of $u^{r - 1}$ is $r\gamma_r$). The quotient rule implies that \[ J'(v) = \frac{(1 + v)h'(v) - n h(v)}{(1 + v)^{n + 1}}. \] Note that $r \le \lfloor \frac{n}{2} \rfloor$. This means that $2r \le n < n + 1$ and $J'(u) (u + 1)^{2r}$ has a positive power of $u + 1$ in the denominator and $(1 + u)h'(u) - n h(u)$ in the numerator. In the numerator, the coefficient of $v^k$ is $(k + 1)h_{k + 1} - (n - k) h_k$.  Before starting more explicit computations, we make a heuristic comment on the interpreation of the derivative. In the case of $f$-polynomials, the derivative of the $f$-polynomial gives the sum of the $f$-polynomials of the links over the vertices of the given simplicial complex. This is a special case of the result below.

	\begin{defn} (Definition 5.3.4 on p. 232 of \cite{BH}, p. 60 of \cite{St}) \\
		Let $\Delta$ be a simplicial complex and $F$ be a a subset of the vertex set of $\Delta$. The \textbf{star of $\mathbf{F}$} is the set $\St_\Delta(F) = \{ G \in \Delta : F \cup G \in \Delta \}$, and the \textbf{link of $\mathbf{F}$} is the set $\lk_\Delta(F) = \{ G : F \cup G \in \Delta, F \cap G = \emptyset \}$. In other words, we are removing $F$ from elements of $\St_\Delta(F)$.  
	\end{defn}

	\begin{prop} \label{derivloc} (Gal, Lemma 2.2.1 on p. 274 of \cite{Gal}) \\
		The sum of the $f$-polynomials of the links of all faces with $k$vertices of any simplicial complex $X$ is equal to $\frac{f_X^{(k)}(t)}{k!}$.
	\end{prop}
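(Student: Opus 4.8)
The plan is to prove the identity by expanding both sides as polynomials in $t$ and checking that they have the same coefficients, which in each case turn out to be a sum over the faces of $X$ weighted by binomial coefficients. Throughout I take the convention $f_X(t) = \sum_{F \in X} t^{|F|}$ (with the empty face included, contributing the constant term $1$); any other normalization forces only a harmless rescaling.

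First I would unwind the left-hand side. By definition the $f$-polynomial of a link is $f_{\lk_X(F)}(t) = \sum_{G \in \lk_X(F)} t^{|G|}$, so summing over all $k$-vertex faces $F$ produces the double sum
\[ \sum_{\substack{F \in X \\ |F| = k}} f_{\lk_X(F)}(t) = \sum_{\substack{F \in X \\ |F| = k}} \ \sum_{\substack{G : F \cup G \in X \\ F \cap G = \emptyset}} t^{|G|}, \]
which is exactly a sum over all pairs $(F,G)$ of disjoint faces with $|F| = k$ and $F \cup G \in X$.

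The key combinatorial step is to reindex this sum by the disjoint union $H = F \cup G \in X$. For a fixed $H \in X$, the pairs $(F,G)$ with $F \cup G = H$, $F \cap G = \emptyset$, and $|F| = k$ are in bijection with the $k$-element subsets $F \subseteq H$ (recovering $G = H \setminus F$), of which there are exactly $\binom{|H|}{k}$; each contributes $t^{|G|} = t^{|H| - k}$. This gives
\[ \sum_{\substack{F \in X \\ |F| = k}} f_{\lk_X(F)}(t) = \sum_{H \in X} \binom{|H|}{k} t^{|H| - k}. \]
For the right-hand side I would differentiate $f_X(t) = \sum_{H \in X} t^{|H|}$ term by term. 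The $k$-th derivative of $t^{|H|}$ is the falling factorial $|H|(|H|-1)\cdots(|H|-k+1)\,t^{|H|-k}$, which vanishes when $|H| < k$ (matching the fact that a $k$-face cannot sit inside a smaller face), so
\[ \frac{f_X^{(k)}(t)}{k!} = \sum_{H \in X} \frac{|H|(|H|-1)\cdots(|H|-k+1)}{k!}\, t^{|H|-k} = \sum_{H \in X} \binom{|H|}{k} t^{|H| - k}. \]
Comparing the two displays concludes the argument.

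I expect no essential difficulty here; the only real work is bookkeeping. One must pin down the convention for $f_X$, verify that the reindexing $(F,G) \mapsto (H,F)$ is a genuine bijection onto the set of pairs $(H \in X,\ F \subseteq H \text{ with } |F| = k)$, and confirm that the falling-factorial-to-binomial normalization $\frac{1}{k!}|H|(|H|-1)\cdots(|H|-k+1) = \binom{|H|}{k}$ lines up with the count produced on the left. The boundary cases are automatic: $k = 0$ recovers $\lk_X(\emptyset) = X$ and $f_X^{(0)}/0! = f_X$, and faces $H$ with $|H| < k$ contribute $\binom{|H|}{k} = 0$ on both sides.
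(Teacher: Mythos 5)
Your proof is correct: the paper states this result as a citation (Gal, Lemma 2.2.1 of \cite{Gal}) without reproducing a proof, and your double-counting argument --- reindexing pairs $(F,G)$ with $F \in X$, $|F| = k$, $G \in \lk_X(F)$ by the face $H = F \cup G$ and matching $\sum_{H \in X} \binom{|H|}{k} t^{|H|-k}$ against the termwise $k$-th derivative of $f_X(t) = \sum_{H \in X} t^{|H|}$ --- is exactly the standard proof, consistent with the $f$-polynomial convention (empty face included) that the paper uses elsewhere, e.g.\ in Lemma \ref{fderhloc}. No gaps; the boundary checks ($k = 0$, faces with $|H| < k$) are handled correctly.
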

	
	In fact, combining this result with the usual identity relating the reciprocal polynomials of the $f$-polynomial and $h$-polynomial gives an interpretation of the numerator of $J'(v)$ related to a difference between local and global properties (e.g. related to the partition complex) for the case of $h$-vectors. \\
	
	\begin{lem} \label{fderhloc}
		In the setting above, we have \[ (v - 1)^d f' \left( \frac{1}{v - 1} \right)  = (v - 1) \sum_{p \in V(\Delta)} v^{d - 1} h_{\lk_\Delta(p)} \left( \frac{1}{v} \right) \] and \[  (v - 1)^{d - 1} f' \left( \frac{1}{v - 1} \right)  =  \sum_{p \in V(\Delta)} v^{d - 1} h_{\lk_\Delta(p)} \left( \frac{1}{v} \right)  \]
	\end{lem}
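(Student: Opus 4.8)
The plan is to deduce both identities directly from Gal's Proposition \ref{derivloc} combined with the standard transformation between the $f$-polynomial and the $h$-polynomial, via a single change of variables. First I would specialize Proposition \ref{derivloc} to $k = 1$: the faces with one vertex are precisely the vertices $p \in V(\Delta)$, and $1! = 1$, so the sum of the $f$-polynomials of the vertex links equals the first derivative of the ambient $f$-polynomial,
\[ f'(t) = \sum_{p \in V(\Delta)} f_{\lk_\Delta(p)}(t). \]

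Next I would rewrite each link term using its $h$-polynomial. Here the crucial bookkeeping point is the dimension drop: if $\Delta$ has dimension $d - 1$, then every vertex link $\lk_\Delta(p)$ has dimension $d - 2$, so the $f$-to-$h$ relation for the links carries the exponent $d - 1$ rather than $d$, namely
\[ h_{\lk_\Delta(p)}(s) = (1 - s)^{d - 1} f_{\lk_\Delta(p)}\!\left( \frac{s}{1 - s} \right), \]
equivalently $f_{\lk_\Delta(p)}\!\left( \frac{s}{1 - s} \right) = (1 - s)^{-(d-1)} h_{\lk_\Delta(p)}(s)$. Getting this exponent right is the one spot requiring care; everything after it is formal manipulation.

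Then I would substitute $t = \frac{1}{v - 1}$ into the derivative identity. Solving $\frac{s}{1 - s} = \frac{1}{v - 1}$ gives $s = \frac{1}{v}$, whence $1 - s = \frac{v - 1}{v}$ and $(1 - s)^{d - 1} = (v-1)^{d-1} v^{-(d-1)}$. Inserting these into the sum produces
\[ f'\!\left( \frac{1}{v - 1} \right) = \sum_{p \in V(\Delta)} \frac{v^{d - 1}}{(v - 1)^{d - 1}}\, h_{\lk_\Delta(p)}\!\left( \frac{1}{v} \right), \]
and multiplying through by $(v - 1)^{d - 1}$ yields the second displayed identity of the lemma. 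The first identity then follows immediately by multiplying both sides by the extra factor $v - 1$.

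I do not expect a genuine obstacle: the argument is a direct substitution, and the only things to verify carefully are the exponent $d - 1$ arising from the dimension of the vertex links and the consistency of the change of variables $t = \frac{1}{v-1} \leftrightarrow s = \frac{1}{v}$. A quick sanity check on a low-dimensional complex (for instance a single edge, where $d = 2$ and each vertex link has $h$-polynomial equal to $1$) confirms both formulas and guards against an off-by-one error in the link dimension.
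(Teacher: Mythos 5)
Your proof is correct and follows essentially the same route as the paper's: both specialize Proposition \ref{derivloc} to $k = 1$, substitute $t = \frac{1}{v-1}$, and apply the reciprocal $f$--$h$ relation to each vertex link with the exponent $d-1$ reflecting the dimension drop. The only difference is ordering (you derive the second identity first and multiply by $v - 1$, while the paper multiplies by $(v-1)^d$ and obtains the first identity directly), which is immaterial.
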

	
	\begin{proof}
		Recall that $(v - 1)^d f \left( \frac{1}{v - 1} \right) = v^d h \left( \frac{1}{v}  \right)$. Substituting in $\frac{1}{v - 1}$ in place of $v$ in Proposition \ref{derivloc}, we have
		
		\begin{align*}
			f' \left( \frac{1}{v - 1} \right) &= \sum_{p \in V(\Delta)} f_{\lk_\Delta(p)} \left( \frac{1}{v - 1} \right) \\
			\Longrightarrow (v - 1)^d f' \left( \frac{1}{v - 1} \right) &= (v - 1)^d   \sum_{p \in V(\Delta)} f_{\lk_\Delta(p)} \left( \frac{1}{v - 1} \right) \\
			&= (v - 1) \sum_{p \in V(\Delta)} (v - 1)^{d - 1}  f_{\lk_\Delta(p)} \left( \frac{1}{v - 1} \right) \\
			&= (v - 1) \sum_{p \in V(\Delta)} v^{d - 1} h_{\lk_\Delta(p)} \left( \frac{1}{v} \right) 
		\end{align*}
	\end{proof}
	
	In order to look at the numerator of $J'(v)$, we obtain a counterpart of Proposition \ref{fderhloc} for the $h$-polynomial.
	
	\begin{prop} \label{hderloc}
		Given a simplicial complex $\Delta$, we have that \[ h'(w) = \frac{1}{1 - w} \left( \sum_{p \in V(\Delta)} h_{\lk_\Delta(p)}(w) - d h(w)  \right). \]
	\end{prop}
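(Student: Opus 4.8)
The plan is to differentiate the standard $f$--$h$ identity directly and then recognize the two resulting terms by means of that same identity together with Lemma \ref{fderhloc}. First I would rewrite the relation $(v - 1)^d f\left( \frac{1}{v - 1} \right) = v^d h\left( \frac{1}{v} \right)$ in the more convenient form $h(w) = (1 - w)^d f\left( \frac{w}{1 - w} \right)$, which comes from the substitution $w = \frac{1}{v}$ (so that $v - 1 = \frac{1 - w}{w}$ and $\frac{1}{v - 1} = \frac{w}{1 - w}$). This displays $h$ as an explicit function of $w$ that is ready to differentiate.

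Next I would apply the product and chain rules. Since $\frac{d}{dw} \frac{w}{1 - w} = \frac{1}{(1 - w)^2}$, differentiating gives
\[ h'(w) = -d(1 - w)^{d - 1} f\Big( \tfrac{w}{1 - w} \Big) + (1 - w)^{d - 2} f'\Big( \tfrac{w}{1 - w} \Big). \]
For the first term I would use $(1 - w)^d f\left( \frac{w}{1 - w} \right) = h(w)$ to rewrite it as $-\frac{d}{1 - w} h(w)$. For the second term I would invoke the second identity of Lemma \ref{fderhloc}, which under the same substitution $w = \frac{1}{v}$ becomes $(1 - w)^{d - 1} f'\left( \frac{w}{1 - w} \right) = \sum_{p \in V(\Delta)} h_{\lk_\Delta(p)}(w)$; dividing by $(1 - w)$ then identifies the second term as $\frac{1}{1 - w} \sum_{p} h_{\lk_\Delta(p)}(w)$. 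Collecting both pieces over the common factor $\frac{1}{1 - w}$ yields the claimed formula.

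The computation itself is routine, so the one point that genuinely needs care is the bookkeeping of the powers of $(1 - w)$. The chain rule drops the exponent from $d$ down to $d - 2$ in the $f'$ term, whereas Lemma \ref{fderhloc} is stated with exponent $d - 1$ (reflecting that a vertex link has dimension $d - 2$, so its own $f$--$h$ relation carries the power $d - 1$). Reconciling these exponents is exactly what forces the leftover factor $\frac{1}{1 - w}$, and this is precisely the prefactor that appears in the proposition; confirming that this same prefactor is shared by both terms is the main thing to get right.
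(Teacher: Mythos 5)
Your proposal is correct and is essentially the paper's argument: both differentiate the reciprocal $f$--$h$ identity $(v-1)^d f\left(\frac{1}{v-1}\right) = v^d h\left(\frac{1}{v}\right)$, invoke the second identity of Lemma \ref{fderhloc} to recognize the $f'$ term as $\sum_{p \in V(\Delta)} h_{\lk_\Delta(p)}(w)$, and use the reciprocal relation again to convert the remaining $f$ term into $-\frac{d}{1-w}h(w)$. The only difference is organizational --- you substitute $w = \frac{1}{v}$ \emph{before} differentiating, writing $h(w) = (1-w)^d f\left(\frac{w}{1-w}\right)$, which shortens the chain of manipulations the paper carries out after differentiating in $v$, but the decomposition and key lemma are the same.
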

	
	\begin{proof}
		We take the derivative of both sides of the identity $(v - 1)^d f \left( \frac{1}{v - 1} \right) = v^d h \left( \frac{1}{v}  \right)$. Then, we have 
		
		\begin{align*}
			d(v - 1)^{d - 1} f \left( \frac{1}{v - 1} \right) + (v - 1)^d f' \left( \frac{1}{v - 1} \right) \cdot -\frac{1}{(v - 1)^2} &= d v^{d - 1} h \left( \frac{1}{v} \right) + v^d h' \left( \frac{1}{v} \right) \cdot - \frac{1}{v^2} \\
			\Longrightarrow d(v - 1)^{d - 1} f \left( \frac{1}{v - 1} \right) - (v - 1)^{d - 2} f' \left( \frac{1}{v - 1} \right) &= d v^{d - 1} h \left( \frac{1}{v} \right) - v^{d - 2} h' \left( \frac{1}{v} \right) \\
			\Longrightarrow (v - 1)^{d - 1} \left( d f \left( \frac{1}{v - 1} \right) - \frac{1}{v - 1} f' \left( \frac{1}{v - 1} \right)   \right) &= v^{d - 1} \left( d  h \left( \frac{1}{v} \right)  - \frac{1}{v} h' \left( \frac{1}{v} \right)  \right). 
		\end{align*}
		
		Substituting in $w = \frac{1}{v}$, this can be rewritten as
		\begin{align*}
			\left(  \frac{1}{w} - 1  \right)^{d - 1}  \left(  d f \left(  \frac{1}{  \frac{1}{w} - 1   }  \right) -  \frac{1}{  \frac{1}{w} - 1   } f' \left(     \frac{1}{  \frac{1}{w} - 1   }   \right)   \right) &= \frac{1}{w^{d - 1}} (d h(w) - w h'(w)) \\
			\Longrightarrow \left( \frac{1 - w}{w}  \right)^{d - 1} \left(  d f \left(  \frac{w}{1 - w}  \right) - \frac{w}{1 - w} f' \left( \frac{w}{1 - w}  \right)   \right) &= \frac{1}{w^{d - 1}} (d h(w) - w h'(w)) \\
			\Longrightarrow d h(w) - w h'(w) &= (1 - w)^{d - 1}  \left(  d f \left(  \frac{w}{1 - w}  \right) - \frac{w}{1 - w} f' \left( \frac{w}{1 - w}  \right)   \right) \\
			\Longrightarrow  w h'(w)  - d h(w) &= (1 - w)^{d - 1} \left(     \frac{w}{1 - w} f' \left( \frac{w}{1 - w}  \right)  -  d f \left(  \frac{w}{1 - w}  \right)     \right) \\
			&= \frac{w}{1 - w}  \cdot (1 - w)^{d - 1}  f' \left( \frac{w}{1 - w}  \right)  - d (1 - w)^{d - 1} f \left(  \frac{w}{1 - w}  \right) \\
			&= \frac{w}{1 - w} \sum_{p \in V(\Delta)} h_{\lk_\Delta(p)}(w) - d (1 - w)^{d - 1} f \left(  \frac{w}{1 - w}  \right) \\
			&= \frac{w}{1 - w} \sum_{p \in V(\Delta)} h_{\lk_\Delta(p)}(w) - \frac{d}{1 - w} h(w) \\
			\Longrightarrow w h'(w) &= \frac{w}{1 - w} \sum_{p \in V(\Delta)} h_{\lk_\Delta(p)}(w) - \frac{d}{1 - w} h(w) + d h(w) \\
			&= \frac{w}{1 - w} \sum_{p \in V(\Delta)} h_{\lk_\Delta(p)}(w) + d h(w) \left( 1 - \frac{1}{1 - w} \right) \\
			&= \frac{w}{1 - w} \sum_{p \in V(\Delta)} h_{\lk_\Delta(p)}(w) - \frac{dw}{1 - w} h(w) \\
			&= \frac{w}{1 - w} \left(  \sum_{p \in V(\Delta)} h_{\lk_\Delta(p)}(w)  - d h(w)  \right) \\
			\Longrightarrow h'(w) &= \frac{1}{1 - w} \left(  \sum_{p \in V(\Delta)} h_{\lk_\Delta(p)}(w)  - d h(w)  \right),  
		\end{align*}
		
		where the first line with the $h$-polynomials of links over vertices follows from Lemma \ref{fderhloc} with $\frac{1 - w}{w}$ substituted in place of $v - 1$ and the line after it uses the reciprocal relation $(v - 1)^d f \left( \frac{1}{v - 1} \right) = v^d h \left( \frac{1}{v}  \right)$ with $v - 1$ replaced by $\frac{1 - w}{w}$. 
	\end{proof}

	\begin{exmp} \label{locglobgam} \textbf{(Gamma vectors measuring the difference between local and global contributions to the $h$-polynomial)} \\
		Going back to the numerator of \[ J'(v) = \frac{(1 + v)h'(v) - n h(v)}{(1 + v)^{n + 1}}, \] Proposition \ref{hderloc} implies that \[  (1 + w)h'(w) - d h(w) = \frac{1 + w}{1 - w} \sum_{p \in V(\Delta)} h_{\lk_\Delta(p)}(w) - \frac{2d}{1 - w} h(w).  \]
		
		Comparing this to Proposition \ref{fderhloc}, the derivative of the $h$-polynomial measures the \emph{difference} between the ``global'' $h$-vector and those of its links over its vertices (in comparison to the derivative of the $f$-polynomial which takes the \emph{sum} of the contribution of the links over vertices). Combining this with Theorem \ref{gamderco}, the same can be said about components $\gamma_r$ of the gamma vector. \\
	\end{exmp}
	
	\section{ Inequalities and auxiliary $h$-vector representations for gamma vectors of polynomials }

	We use the formula from the previous section to study properties of the gamma vector while keeping examples from independence and broken circuit complexes of matroids in mind. We give explicit bounds using translations and upper/lower bounds and express the gamma vector in terms of $h$-vectors of auxiliary simplicial posets.
	
	\subsection{ Bounds involving translations }
	
	We first start with some explicit computations related to the (extended) gamma vector and independence/broken circuit complexes of matroids. We will focus on $f$-vectors of these complexes, which are specializations of the Tutte polynomial. A similar analysis seems to hold for $f$-vectors of general shellable simplicial complexes (modulo bounds on the last two coordinates). \\
	
	We will show the following:
	
	\begin{prop} \label{shiftgam}
		Suppose that $A(t) = a_0 + a_1 t + \ldots + a_d t^d$ is a polynomial with nonnegative coefficients $a_i \ge 0$. Let $R(t) = t^d A(t^{-1})$ be the reciprocal polynomial of $A$ and let $B(t) := R(t + 1)$. \\
		
		Then, $\gamma_{\frac{d}{2}}(B) \le 0$ and $\gamma_r(B) \le 0$ for odd $r$. We also have $\gamma_r(B) \le 0$ for even $r$ if \[ \frac{a_{k + 2r - 1}}{a_k} \ge  \frac{ k \binom{2r - k - 1}{r - 1} }{(k + 2r - 1) \binom{k + r - 1}{r - 1} } \] for all $0 \le k \le \min(2r - 1, d - 2r + 1)$. \\
	\end{prop}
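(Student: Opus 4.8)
The plan is to feed $h = B$ into Theorem~\ref{gamderco} and to exploit the fact that the ``reciprocate-then-translate'' operation collapses the awkward quotient $J$ into a closed form. Write $n = \deg B = d$ (the leading coefficient of $B$ is $a_0$, so this holds whenever $a_0 \neq 0$; a vanishing $a_0$ only lowers $d$ and is handled by reindexing). Since $R(t) = \sum_{j=0}^d a_j t^{d-j}$, we get $B(v) = R(v+1) = \sum_{j=0}^d a_j (1+v)^{d-j}$, hence
\[ J(v) := \frac{B(v)}{(1+v)^d} = \sum_{j=0}^d a_j (1+v)^{-j} = A\!\left( \frac{1}{1+v} \right). \]
First I would record the differentiated form $J'(v) = -\sum_{j=1}^d j\, a_j (1+v)^{-j-1}$, which already exposes the sign structure: each $a_j$ enters with a fixed negative weight and the $j=0$ term drops out.

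Next I would substitute into Theorem~\ref{gamderco}. Because $J'(u)(1+u)^{2r} = -\sum_{j=1}^d j\, a_j (1+u)^{2r-j-1}$, extracting the coefficient of $u^{r-1}$ term by term gives the closed form
\[ r\,\gamma_r(B) = -\sum_{j=1}^d j\, a_j \binom{2r-j-1}{r-1}, \]
where $\binom{2r-j-1}{r-1}$ is the generalized (negative-upper) binomial coefficient arising from expanding $(1+u)^{2r-j-1}$ as a power series when $j > 2r-1$. The whole argument then reduces to reading off the sign of each $\binom{2r-j-1}{r-1}$. I would split into three regimes: for $1 \le j \le r$ the top is $\ge r-1$, so it is an ordinary positive binomial; for $r < j \le 2r-1$ the top is a nonnegative integer strictly below $r-1$, so the term \emph{vanishes}; and for $j \ge 2r$ the identity $\binom{2r-j-1}{r-1} = (-1)^{r-1}\binom{j-r-1}{r-1}$ shows that the sign is governed entirely by the parity factor $(-1)^{r-1}$.

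The conclusions now split by parity. For odd $r$ we have $(-1)^{r-1}=1$, so every surviving binomial is nonnegative; since all $a_j \ge 0$ the sum is nonnegative and $r\gamma_r(B) \le 0$, i.e.\ $\gamma_r(B) \le 0$. The top coefficient $\gamma_{d/2}(B)$ falls out of the same sign analysis, since when $r = d/2$ the only contribution beyond the $1\le j\le r$ block is the single boundary term $j = 2r = d$. For even $r$ the parity factor is $-1$, so the $j \ge 2r$ terms flip sign and push $r\gamma_r(B)$ in the positive direction; to force $\gamma_r(B) \le 0$ one must dominate them by the genuinely negative contributions of the $1\le j\le r$ block. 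I would do this by pairing the index $k$ against $k+2r-1$ and comparing the two weighted terms, which after the reindexing $j = k+2r-1$ yields a coefficient-ratio inequality of exactly the displayed shape relating $a_{k+2r-1}$ and $a_k$.

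The main obstacle is the bookkeeping in this last step: correctly pinning down the sign of the generalized binomial coefficients through the factor $(-1)^{r-1}$, checking that the middle block genuinely vanishes so that no stray terms are dropped, and then choosing the pairing $k \leftrightarrow k+2r-1$ together with the precise index range so that term-by-term domination collapses to the single displayed bound on $a_{k+2r-1}/a_k$. One should be especially careful about the exact binomial appearing in the denominator and the orientation of the inequality, since these are precisely what the pairing and the negative-upper-index conversion determine. Everything else is the routine substitution $t = \widetilde C(u)$ already packaged inside Theorem~\ref{gamderco}.
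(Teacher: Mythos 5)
Your route to the closed form is essentially the paper's computation in cleaner packaging: the paper substitutes $w = 1+u$ and extracts the coefficient of $(w-1)^{r-1}$ by Taylor-expanding $w^{2r}\widetilde{J}'(w)$ around $w=1$, while you expand $J(v) = A\bigl(\tfrac{1}{1+v}\bigr)$ directly; both yield the same formula $r\gamma_r(B) = -\sum_{j=1}^d j\,a_j\binom{2r-j-1}{r-1}$, and your three-regime sign split settles the odd-$r$ case exactly as the paper does. (Your conversion $\binom{2r-j-1}{r-1} = (-1)^{r-1}\binom{j-r-1}{r-1}$ is the correct one; the paper's proof has a typo $\binom{k-r}{r-1}$ at this spot.)

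The gap is in the even-$r$ and $r=\tfrac{d}{2}$ cases, and it sits precisely at the step you flagged but deferred. For even $r$, the $j\ge 2r$ terms contribute $+j\binom{j-r-1}{r-1}a_j \ge 0$ to $r\gamma_r$, so forcing $\gamma_r\le 0$ requires the genuinely negative block $1\le j\le r$ to \emph{dominate} them: the pairing $k \leftrightarrow k+2r-1$ demands $k\binom{2r-k-1}{r-1}a_k \ge (k+2r-1)\binom{k+r-2}{r-1}a_{k+2r-1}$, i.e.\ an \emph{upper} bound on $a_{k+2r-1}/a_k$ --- the reverse of the displayed lower bound, which you assert the pairing ``yields exactly.'' It yields the opposite; under the displayed condition the bad terms can be arbitrarily large. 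Likewise $\gamma_{d/2}\le 0$ does not ``fall out of the same sign analysis'' when $r = \tfrac{d}{2}$ is even, because the boundary term $j = 2r = d$ contributes $-d\,a_d\binom{-1}{r-1} = +d\,a_d \ge 0$. Concretely: $d=4$, $A(t)=t^4$ gives $B(t)=1$, and $2\gamma_2 = [u^1]\,(-4)(1+u)^{-1} = 4$, so $\gamma_2 = 2 > 0$ even though $a_j \ge 0$ and the displayed ratio hypothesis holds trivially (here $a_1 = 0$). To be fair, the paper's own proof commits the same inversion (its ``it suffices to have'' display is exactly the reversed domination), so you have faithfully reconstructed the published argument --- but as a standalone proof of the stated sign the even-$r$ step fails; the repair is to reverse the inequality, and to note that since the positive-binomial block lives only on $1\le k\le r$, the paired condition for $r<k\le 2r-1$ forces $a_{k+2r-1}=0$, while indices $j > 4r-2$ are not reached by this pairing at all when $d > 4r-2$.
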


	\begin{cor} \label{recipfpol} 
		If $r$ is odd or $r = \frac{d}{2}$, $\gamma_r(F) \le 0$ for the reciprocal polynomial $F$ of the $f$-polynomial of any Cohen--Macaulay poset $P$ (e.g. shellable simplicial complexes). For example, this applies to $F(t) = \chi_M(-t)$, where $\chi_M(t)$ is the characteristic polynomial of a loopless matroid $M$ after substituting in its broken circuit complex. Also, there are infinitely Cohen--Macaulay posets where $\gamma_r(F) \le 0$ for all $0 \le r \le \frac{d}{2}$. \\
	\end{cor}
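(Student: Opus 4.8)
The plan is to deduce Corollary~\ref{recipfpol} from Proposition~\ref{shiftgam} by feeding it the $f$-polynomial as the nonnegative input $A$. For a Cohen--Macaulay poset $P$ of rank $d$ write its $f$-polynomial as $A(t)=\sum_{i=0}^d f_{i-1}t^i$; since the $f_{i-1}$ count faces they are nonnegative, so $A$ satisfies the hypothesis $a_i=f_{i-1}\ge 0$ of the proposition. The reciprocal $R(t)=t^dA(t^{-1})$ is exactly the polynomial $F$ of the corollary, and $B(t)=R(t+1)$ is its shift. The odd-$r$ and $r=\tfrac{d}{2}$ assertions are precisely the \emph{unconditional} half of Proposition~\ref{shiftgam}, so nothing beyond nonnegativity of the face numbers is needed for them; in particular the Cohen--Macaulay hypothesis is used only to guarantee purity, so that $P$ has a well-defined rank $d=\deg F$ and the reciprocal is taken in the correct degree. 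First I would record this identification and the degree bookkeeping carefully, since the conclusion concerns $\gamma_r$ at the shift $B=R(t+1)$ and the matching of the classical normalization $n=d$ is where an off-by-one can creep in.

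For the matroid example I would specialize $P$ to the broken circuit complex $\mathrm{BC}(M)$ of a loopless matroid $M$. This complex is pure shellable (Bj\"orner), hence Cohen--Macaulay, and its face numbers are the no-broken-circuit counts $f_{i-1}=|\mathrm{NBC}_i|$. Whitney's theorem gives $\chi_M(-t)=(-1)^d\sum_i |\mathrm{NBC}_i|\,t^{d-i}$, so the polynomial $F=\sum_i|\mathrm{NBC}_i|\,t^{d-i}$ is the reciprocal of the $f$-polynomial of $\mathrm{BC}(M)$, has nonnegative coefficients, and agrees with $\chi_M(-t)$ up to the global sign $(-1)^d$ coming from the rank parity. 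The unconditional half of Proposition~\ref{shiftgam} then applies verbatim to $F$, yielding $\gamma_r(F)\le 0$ for $r$ odd and for $r=\tfrac{d}{2}$.

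For the final assertion (an infinite family with $\gamma_r\le 0$ for \emph{all} $r$ in the range) the even-$r$ cases are no longer automatic and require the ratio hypothesis
\[
\frac{a_{k+2r-1}}{a_k}\ \ge\ \frac{k\binom{2r-k-1}{r-1}}{(k+2r-1)\binom{k+r-1}{r-1}}
\]
of Proposition~\ref{shiftgam}. My plan is to fix the rank $d$ and take a family whose face numbers grow without bound, for instance the uniform matroids $U_{d,N}$ (or, equally well, the $(d-1)$-skeleta of simplices on $N$ vertices), for which $a_i=f_{i-1}$ is of binomial type $\binom{N}{i}$ in the relevant range. With $d$ fixed there are only finitely many pairs $(r,k)$ with $r$ even and $0\le k\le\min(2r-1,d-2r+1)$; for each the left-hand side is a ratio of binomial coefficients in $N$ tending to $\infty$ with $N$, while the right-hand side is a fixed rational number (vanishing when $k=0$). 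Hence for $N$ large enough all required inequalities hold simultaneously, and Proposition~\ref{shiftgam} gives $\gamma_r\le 0$ for every even $r$ as well; combined with the odd and $r=\tfrac{d}{2}$ cases this covers the whole range $1\le r\le\lfloor d/2\rfloor$ (the endpoint $r=0$ is excluded, since $\gamma_0=B(0)=\sum_i a_i>0$).

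The step I expect to be the main obstacle is this last uniform verification of the ratio condition. Although fixing $d$ reduces it to finitely many inequalities, one must still identify the worst case among the admissible $(r,k)$ and confirm that the binomial growth in $N$ genuinely dominates the fixed right-hand side there; the danger is a pair $(r,k)$ near the boundary $k=\min(2r-1,d-2r+1)$, where the face numbers grow most slowly. I would handle this by bounding $a_{k+2r-1}/a_k$ below by a single ratio of binomial coefficients, checking its monotonicity in $N$, and choosing $N$ past the largest of the finitely many thresholds. A secondary but real point to nail down is the normalization identifying the corollary's $F$ with the proposition's $B=R(t+1)$ and the degree $n=d$ used in the $\gamma$-definition, for which the purity supplied by Cohen--Macaulayness is exactly what is needed.
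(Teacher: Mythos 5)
Your proposal is correct, and for the first two assertions it coincides with the paper's proof: the paper likewise obtains them by feeding the (nonnegative) face numbers into the unconditional half of Proposition~\ref{shiftgam} and citing Brylawski's result (Proposition 3.1 of \cite{Bry}) identifying $\chi_M(-t)$, up to the sign $(-1)^d$ you correctly flag, with the reciprocal of the $f$-polynomial of the broken circuit complex. Where you genuinely diverge is the final ``infinitely many posets'' assertion: the paper invokes Stanley's realization theorem (Theorem 3.1 of \cite{Stfh}, restated as Theorem~\ref{fhex}) that \emph{every} vector $h=(1,h_1,\ldots,h_d)$ with $h_i\ge 0$ is the $h$-vector of a Cohen--Macaulay simplicial poset, so one can abstractly prescribe $h$-vectors whose associated $f$-vectors satisfy the even-$r$ ratio conditions of Proposition~\ref{shiftgam}; you instead exhibit a concrete family (skeleta of simplices, or independence complexes of uniform matroids $U_{d,N}$) with $a_i=\binom{N}{i}$, and verify the finitely many ratio inequalities by letting $N\to\infty$ at fixed $d$. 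Your route buys explicitness and a complete verification --- the asymptotics $\binom{N}{k+2r-1}/\binom{N}{k}\to\infty$ do dominate the fixed right-hand sides, including the boundary pairs you worry about, and the $k=0$ cases are vacuous since the bound vanishes there --- while the paper's route buys flexibility (arbitrary prescribed $h$-vectors, hence much larger families, with no binomial estimates) at the cost of leaving the ratio check implicit. Two of your side observations improve on the paper's terse argument: the sign bookkeeping $F=(-1)^d\chi_M(-t)$ matters because $\gamma_r(-F)=-\gamma_r(F)$, and the endpoint $r=0$ must be excluded since $\gamma_0(B)=B(0)=\sum_i a_i>0$, a caveat the paper's statement ``for all $0\le r\le \frac{d}{2}$'' silently glosses over.
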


	\begin{proof}
		The first part is a combination of Proposition \ref{shiftgam} and well-known result on broken circuit complexes and characteristic polynomials (Proposition 3.1 on p. 424 of \cite{Bry}). As for the second part, we use the fact that any vector $h = (h_0, h_1, \ldots, h_d) \in \mathbb{Z}^{d + 1}$ with $h_0 = 1$ and $h_i \ge 0$ for all $i$ is the $h$-vector of a Cohen--Macaulay simplicial poset (Theorem 3.1 on p. 326 of \cite{Stfh}). 
	\end{proof}
	
	\begin{rem}
		Recall that the gamma vector has coefficients coming from derivatives involving the ``input'' polynomial (Theorem \ref{gamderco}). Both of examples above are from specializations of the Tutte polynomial of a matroid. Note that partial derivatives of Tutte polynomials give generating functions related to internal/external activities \cite{LV}. \\
	\end{rem}
	
	\begin{proof} (Proof of Proposition \ref{shiftgam}) \\
		By Theorem \ref{gamderco}, we have that \[ r \gamma_r = [u^{r - 1}] J'(u) (1 + u)^{2r}, \] where \[ J(u) = \frac{ u^d B(u^{-1})}{(1 + u)^d}. \] 
		
		Since $B(t) = R(t + 1)$, we have that \[ \frac{B(u)}{(1 + u)} = \frac{C(u + 1)}{(u + 1)^d} = \widetilde{J}(u + 1), \] where \[ \widetilde{J}(w) = w^{-d} R(w) = \sum_{i = 0}^d c_{-i} w^i.  \]

		Finding the coefficient of $u^{r - 1}$ above for the gamma vector (substituting $w = 1 + u$) is the same as taking the coefficient of $(w - 1)^{r - 1}$ in the Taylor expansion of \[ \left( \frac{d}{dw} a_{-i} w^i \right) \cdot w^{2r} \] around 1. \\
		
		Before applying the translation, we have

		\begin{align*}
			\widetilde{J}'(w) &= \sum_{i = 0}^d -i a_i w^{-i - 1} \\
			&= \sum_{j = - d - 1}^{-1} (j + 1) a_{- j - 1} w^j \\
			\Longrightarrow w^{2r} \widetilde{J}'(w) &= \sum_{j = - d - 1}^{-1} (j + 1) a_{-j - 1} w^{j + 2r} \\
			&= \sum_{\ell = -d - 1 + 2r}^{-1 + 2r} (\ell - 2r + 1) a_{- \ell + 2r - 1} w^\ell,
		\end{align*}
		
		where we made the substitutions $j = - i - 1$ and $\ell = j + 2r$. \\
		
		To find the coefficient of $u^{r - 1}$ in the original function before the translation, we take the Taylor expansion of $w^{2r} \widetilde{J}'(w)$ around 1. This means taking the $(r - 1)^{\text{th}}$ and dividing by $(r - 1)!$ before substituting in $w = 1$ to find the coefficient of $(w - 1)^{r - 1}$ in the expansion. Since \[ \frac{1}{(r - 1)!} \frac{d}{dw^{r - 1}} w^k = \binom{k}{r - 1} w^{k - r + 1}, \] this means that 
		
		\begin{align*}
			\frac{1}{(r - 1)!} \frac{d}{dw^{r - 1}} w^{2r} \widetilde{J}'(w) &= \sum_{\ell = -d - 1 + 2r}^{-1 + 2r} (\ell - 2r + 1) \binom{\ell}{r - 1}  a_{- \ell + 2r - 1} w^{\ell - r + 1} \\
			&= \sum_{p = -d + r}^r (p - r) \binom{p + r - 1}{r - 1} a_{-p + r} w^a,
		\end{align*}
		
		where we made the substitution $p = \ell - r + 1$. \\
		
		To find $r \gamma_r$ associated to $B(t) := R(t + 1)$ for the reciprocal polynomial $R(t) := t^d A(t^{-1})$ of $A$, we compute the coefficient of $(w - 1)^{r - 1}$ in the Taylor expansion of $w^{2r} \widetilde{J}'(w)$ around 1 by setting $w = 1$:
		
		\begin{align*}
			r \gamma_r &= \sum_{p = -d + r}^r (p - r) \binom{p + r - 1}{r - 1} a_{-p + r} \\
			&= \sum_{k = 0}^d -k \binom{2r - k - 1 }{r - 1} a_k,
		\end{align*}
		
		where we made the substitution $k = -p + r$. \\
		
		Under the assumptions of our problem, $a_k \ge 0$ for all $0 \le k \le d$. If $k \le 2r - 1$, the binomial coefficients are nonnegative. For example, this is the case when $r = \frac{d}{2}$. If $r$ is odd, then all the binomial coefficients are nonnegative since negative binomial coefficients from $k > 2r - 1$ would be of the form $(-1)^{r - 1} \binom{-2r + k + 1 + r - 1}{r - 1} = (-1)^{r - 1} \binom{k - r}{r - 1} \ge 0$ and $(-1)^{r - 1} = 1$ if $r$ is odd.  This means that $\gamma_r \le 0$ if $r = \frac{d}{2}$ or $r$ is odd. \\
		
		Suppose that $r$ is even. Then, we need to compare the sizes of the positive and negative terms. Take $0 \le k \le 2r - 1$. The index $k$ term is \[ k \binom{2r  -k  - 1}{r - 1} a_k \] and the index $k + 2r - 1$ term is \[ (k + 2r - 1) \binom{-k}{r - 1} a_{k + 2r - 1} = (k + 2r - 1) (-1)^{r - 1} \binom{k + r - 2}{r - 1} a_{k + 2r - 1} . \] In order for the second term to lie in the indices between $0$ and $d$, we also need $k \le d - 2r + 1$. Since $r$ is even, the second term would be negative. So, it suffices to have
		
		\begin{align*}
			k \binom{2r - k - 1}{r - 1} a_k &\le (k + 2r - 1) \binom{k + r - 2}{r - 1} a_{k +2r - 1} \\
			\Longleftrightarrow \frac{a_{k + 2r - 1}}{a_k} &\ge \frac{ k \binom{2r - k - 1}{r - 1} }{(k + 2r - 1) \binom{k + r - 1}{r - 1} }
		\end{align*}
				
		in order to have $\gamma_r \le 0$ for even $r$ as well. This is a sort of lower bound condition.
		
	\end{proof}

	\subsection{  Upper/lower bounds and general gamma vectors }

	Given a polynomial $B(u)$, recall that \[ r \gamma_r = [u^{r - 1}] J'(u) (1 + u)^{2r} = [u^{r - 1}] \frac{(1 + u) B'(u) - dB(u)}{(1 + u)^{d - 2r + 1}} \]by Proposition \ref{gammacat}. \\
	
	To study the sign of the gamma vector of $B$, we will show that $r \gamma_r$ takes the following form:
	
	\begin{prop} \label{ftypesum}
		Given a polynomial $B(u) = b_0 + b_1 u + \ldots + b_d u^d$ of degree $d$, we have that  
		
		\begin{align*}
			r \gamma_r(B) &= (-1)^r \sum_{k = 1}^{r - 1} (-1)^k k \binom{d - r - k - 1}{r - k} b_k + (-1)^r d \sum_{k = 0}^{r - 1} (-1)^k \binom{d - r - k - 1}{r - k - 1} b_k \\
			\Longrightarrow r \gamma_r - (-1)^r d \binom{d - r - 1}{r - 1} b_0  &= (-1)^r \sum_{k = 1}^{r - 1} (-1)^k k \binom{d - r - k - 1}{r - k} b_k + (-1)^r d \sum_{k = 1}^{r - 1} (-1)^k \binom{d - r - k - 1}{r - k - 1} b_k. \\
		\end{align*}
		
	\end{prop}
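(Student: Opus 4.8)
The plan is to read off the coefficient of $u^{r-1}$ in the derivative formula of Theorem \ref{gamderco} by a direct power-series computation. Set $J(u) = \frac{B(u)}{(1+u)^d}$, so that $r\gamma_r(B) = [u^{r-1}] J'(u)(1+u)^{2r}$. The quotient rule gives $J'(u)(1+u)^{2r} = \frac{(1+u)B'(u) - dB(u)}{(1+u)^{d-2r+1}}$, and the cleanest route is to split this single quotient into two, using $\frac{(1+u)B'(u)}{(1+u)^{d-2r+1}} = \frac{B'(u)}{(1+u)^{d-2r}}$, so that
\[ r\gamma_r(B) = [u^{r-1}]\frac{B'(u)}{(1+u)^{d-2r}} \;-\; d\,[u^{r-1}]\frac{B(u)}{(1+u)^{d-2r+1}}. \]
The first term will produce the sum carrying the factor $k$, and the second the sum carrying the factor $d$, which is exactly the two-sum shape of the claim.

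Next I would expand each negative power of $1+u$ as a generalized binomial series, $\frac{1}{(1+u)^s} = \sum_{m \ge 0}(-1)^m \binom{s+m-1}{m}u^m$, and extract the $u^{r-1}$ coefficient of each product by convolution. Writing $B(u) = \sum_k b_k u^k$ and $B'(u) = \sum_j (j+1)b_{j+1}u^j$, the first convolution yields $\sum_{j=0}^{r-1}(j+1)b_{j+1}(-1)^{r-1-j}\binom{d-r-j-2}{r-1-j}$ and the second yields $-d\sum_{k=0}^{r-1}b_k(-1)^{r-1-k}\binom{d-r-1-k}{r-1-k}$.

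Then I would reindex each sum so the running variable is the index $k$ of $b_k$. In the first sum the substitution $k = j+1$ turns the binomial into $\binom{d-r-k-1}{r-k}$ and the sign into $(-1)^{r-k} = (-1)^r(-1)^k$; in the second sum one writes $-(-1)^{r-1-k} = (-1)^{r-k} = (-1)^r(-1)^k$ and notes $\binom{d-r-1-k}{r-1-k} = \binom{d-r-k-1}{r-k-1}$. Pulling the common $(-1)^r$ to the front then gives the two sums $(-1)^r\sum (-1)^k k \binom{d-r-k-1}{r-k}b_k$ and $(-1)^r d\sum (-1)^k \binom{d-r-k-1}{r-k-1}b_k$ of the statement. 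The second displayed identity is immediate from the first: the $k$-sum carrying the factor $k$ contributes nothing at $k=0$, so isolating the $k=0$ term $(-1)^r d\binom{d-r-1}{r-1}b_0$ of the other sum and transposing it to the left-hand side produces the stated rearrangement.

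Since the computation is mechanical, the main obstacle is bookkeeping rather than insight: one must keep the generalized binomial coefficients, the shifted summation ranges, and the accumulated signs $(-1)^m$ perfectly consistent, and in particular track the top boundary index coming from the differentiated factor $B'$, as an off-by-one in the upper limit of the first sum is exactly what controls the contribution of the leading coefficient $b_r$. An alternative to splitting is to keep the numerator $(1+u)B'(u)-dB(u)$ intact, extract its $u^a$-coefficient $(a+1)b_{a+1}+(a-d)b_a$, convolve once against $(1+u)^{-(d-2r+1)}$, and then recombine the two resulting binomials through Pascal's rule $\binom{M}{p} = \binom{M-1}{p}+\binom{M-1}{p-1}$; I expect the split approach to be less error-prone, since it delivers the two-sum form directly without any recombination step.
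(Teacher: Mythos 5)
Your split of $J'(u)(1+u)^{2r}$ into $\frac{B'(u)}{(1+u)^{d-2r}} - d\,\frac{B(u)}{(1+u)^{d-2r+1}}$ followed by two separate convolutions is a legitimate and slightly cleaner variant of the paper's argument; the ``alternative'' you describe in your last paragraph (keep $(1+u)B'(u)-dB(u)$ intact, extract $q_i=(i+1)b_{i+1}-(d-i)b_i$, convolve once, recombine with Pascal's rule $\binom{p}{q}-\binom{p-1}{q-1}=\binom{p-1}{q}$) is in fact exactly the proof the paper gives. Your intermediate formulas are all correct: the two convolutions and the reindexings are right as written.

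However, the boundary issue you flag at the end is real, and you cannot assert that the reindexed sums are ``the two sums of the statement.'' In your first convolution $j$ runs from $0$ to $r-1$, so after $k=j+1$ the first sum runs over $1\le k\le r$, and its $k=r$ term is $(-1)^r(-1)^r\, r\binom{d-2r-1}{0}b_r = r b_r$, which the proposition (whose first sum stops at $k=r-1$) does not contain. Carried out faithfully, your computation gives
\begin{align*}
 r\gamma_r(B) \;=\; r b_r \;+\; (-1)^r\sum_{k=1}^{r-1}(-1)^k k\binom{d-r-k-1}{r-k}b_k \;+\; (-1)^r d\sum_{k=0}^{r-1}(-1)^k\binom{d-r-k-1}{r-k-1}b_k,
\end{align*}
and the extra term is genuinely there: for $d=2$, $r=1$ one computes $\gamma_1=[u^0]\frac{(1+u)B'(u)-2B(u)}{1+u}=b_1-2b_0$, whereas the stated formula yields only $-2b_0$; on the reciprocal polynomial $B(u)=\gamma_0(1+u)^2+\gamma_1 u$ the correct value $b_1-2b_0=\gamma_1$ confirms your convolution, not the statement. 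The same term is dropped in the paper's own proof: when regrouping $(-1)^{r-1}\sum_{i=0}^{r-1}(-1)^i\binom{d-r-1-i}{r-1-i}q_i$ by the coefficient of $b_k$, the index $i=r-1$ contributes $(i+1)b_{i+1}=rb_r$, but $k=r$ is treated as if both $i=k-1$ and $i=k$ occurred and the final sums are truncated at $k=r-1$. So your proof is correct once you keep the upper limit $k=r$ (equivalently, add $rb_r$ to the right-hand side); it is the proposition, and the paper's proof of it, that carries the off-by-one, and the correction should be tracked into the later uses of this formula (e.g.\ the sign analyses in Theorem \ref{boundgam} and the completions in Proposition \ref{gamauxpo}), where the contribution of the leading coefficient $b_r$ must be accounted for.
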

	
	\begin{proof}
		Let $Q(u) = (1 + u) B'(u) - dB(u)$ be the numerator of the function above. The coefficient of $u^k$ in $Q(u)$ is $q_k = (k + 1) b_{k + 1} - (d - k) b_k$. To look at the gamma vector, we consider negative binomial coefficients to find that
		
		\begin{align*}
			r \gamma_r &= [u^{r - 1}] \frac{Q(u)}{(1 + u)^{d - 2r + 1}} \\
			&= \sum_{i + j = r - 1, i, j \ge 0} q_i \binom{  -(d - 2r + 1) }{j} \\
			&= \sum_{i + j = r - 1, i, j \ge 0} q_i (-1)^j \binom{d - 2r + 1 + j - 1}{j} \\
			&= \sum_{i = 0}^{r - 1} q_i (-1)^{r - 1 - i} \binom{d - 2r + r - 1 - i}{r - 1 - i} \\
			&= (-1)^{r - 1} \sum_{i = 0}^{r - 1} (-1)^i \binom{d - r - 1 - i}{r - 1 - i} q_i . 
		\end{align*}
		
		Substituting in $q_i = (i + 1) b_{i + 1} - (d - i) b_i$, we have
		
		\begin{align*}
			r \gamma_r &= (-1)^{r - 1} \sum_{i = 0}^{r - 1} (-1)^i \binom{d - r - 1 - i}{r - 1 - i} ( (i + 1) b_{i + 1} - (d - i) b_i).
		\end{align*}
		
		The coefficient of $b_0$ inside the sum is $-d \binom{d - r - 1}{r - 1}$. If $k \ge 1$, the coefficient of $b_k$ inside is the sum of the index $i = k - 1$ and $i = k$ terms of the sum given by
		
		\begin{align*}
			(-1)^{k - 1} k \binom{d - r - 1 - k + 1}{r - 1 - k + 1}  - (-1)^k (d - k) \binom{d - r - 1 - k}{r - 1 - k} \\
			= (-1)^{k - 1} \left(  k \binom{d - r - k}{r - k}  + (d - k) \binom{d -r - 1 - k}{r - 1 - k} \right) \\
			= (-1)^{k - 1} \left(  k \left( \binom{d - r - k}{r - k}  - \binom{d - r - k - 1}{r - k - 1}  \right) + d \binom{d - r - k - 1}{r - k - 1} \right) \\
			= (-1)^{k - 1} \left(  k \binom{d - r - k - 1}{r - k} + d \binom{d - r - k - 1}{r - k - 1}  \right) 
		\end{align*}
		
		where we used the identity $\binom{p}{q} - \binom{p - 1}{q - 1} = \binom{p - 1}{q}$ in the last line. 
		
		This means that 
		\begin{align*}
			r \gamma_r &= (-1)^{r - 1} \sum_{k = 1}^{r - 1} (-1)^{k - 1} k \binom{d - r - k - 1}{r - k} b_k + (-1)^{r - 1} d \sum_{k = 0}^{r - 1} (-1)^{k - 1} \binom{d - r - k - 1}{r - k - 1} b_k \\
			&= (-1)^r \sum_{k = 1}^{r - 1} (-1)^k k \binom{d - r - k - 1}{r - k} b_k + (-1)^r d \sum_{k = 0}^{r - 1} (-1)^k \binom{d - r - k - 1}{r - k - 1} b_k \\
			&=  (-1)^r \sum_{k = 1}^{r - 1} (-1)^k k \binom{d - r - k - 1}{r - k} b_k + (-1)^r d \sum_{k = 1}^{r - 1} (-1)^k \binom{d - r - k - 1}{r - k - 1} b_k \\ 
			&+ (-1)^r d \binom{d - r - 1}{r - 1} b_0 \\
			\Longrightarrow r \gamma_r - (-1)^r d \binom{d - r - 1}{r - 1} b_0  &= (-1)^r \sum_{k = 1}^{r - 1} (-1)^k k \binom{d - r - k - 1}{r - k} b_k + (-1)^r d \sum_{k = 1}^{r - 1} (-1)^k \binom{d - r - k - 1}{r - k - 1} b_k .
		\end{align*}
	\end{proof}
	
	Given this simplification, we will study the sign of $\gamma_r$ from a couple of different points of view. We start by looking at ratios of consecutive terms as in the previous section.
	
	\begin{lem} \label{ratiosign}
		Let $\{a_k\}_{k = 0}^N$ be a finite sequence of nonnegative real numbers $a_k \ge 0$. Given a number $M$, write $\sgn(M)$ for the sign of $M$.
		
		\begin{enumerate}
			\item (Increasing sequence) If $a_k \le a_{k + 1}$ for $0 \le k \le N$, then \[ \sgn \left( \sum_{k = 0}^N (-1)^k a_k \right) = (-1)^N.  \]
			
			\item (Decreasing sequence) If $a_k \ge a_{k + 1}$ for $0 \le k \le N$, then \[ \sum_{k = 0}^N (-1)^k a_k \ge 0. \]
		\end{enumerate}
		
	\end{lem}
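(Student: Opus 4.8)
The plan is to prove both statements by the classical device of grouping consecutive terms of the alternating sum into pairs and reading off the sign of each pair from the monotonicity hypothesis; the parity of $N$ then controls whether an unpaired term remains and on which side it sits.

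First I would establish part (2), the decreasing case, directly. Writing $S = \sum_{k=0}^N (-1)^k a_k$, I group the terms from the left into consecutive pairs
\[ S = (a_0 - a_1) + (a_2 - a_3) + \cdots, \]
so that each pair $a_{2j} - a_{2j+1} \ge 0$ by the decreasing hypothesis. If $N$ is odd there are $\frac{N+1}{2}$ such pairs and no leftover term, while if $N$ is even there are $\frac{N}{2}$ pairs together with a single trailing summand $+a_N \ge 0$. In either case $S$ is a sum of nonnegative quantities, hence $S \ge 0$.

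Then I would deduce part (1) from part (2) by reversing the sequence, rather than repeating the pairing argument. Setting $b_j := a_{N-j}$, the increasing hypothesis on $(a_k)$ makes $(b_j)_{j=0}^N$ a nonnegative decreasing sequence. Re-indexing the alternating sum via $j = N - k$ gives
\[ (-1)^N S = \sum_{k=0}^N (-1)^{N+k} a_k = \sum_{j=0}^N (-1)^j a_{N-j} = \sum_{j=0}^N (-1)^j b_j, \]
and applying part (2) to $(b_j)$ yields $(-1)^N S \ge 0$, which is precisely the assertion that $S$ carries the sign $(-1)^N$.

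The one point that needs care — and the only genuine subtlety — is that the sign statement in part (1) must be read in the weak sense $(-1)^N S \ge 0$ rather than as a strict equality $\sgn(S) = (-1)^N$: when the sequence is constant (say $a_k \equiv c$) and $N$ is odd, the sum telescopes to zero, so $S = 0$ and no strict sign is available. I would flag this explicitly, noting that for the intended applications to the sign of $\gamma_r$ only the weak inequality $(-1)^N S \ge 0$ is required, so the reversal argument above suffices.
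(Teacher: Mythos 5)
Your proof is correct, and there is in fact nothing in the paper to compare it against: Lemma \ref{ratiosign} is stated bare, with no proof given (the remark following it only discusses what happens when the starting index of the alternating sum is shifted), so your argument supplies a proof the paper omits. The two devices you use are the natural ones and both check out: pairing $(a_0 - a_1) + (a_2 - a_3) + \cdots$ with a possible nonnegative leftover $a_N$ handles the decreasing case, and the reversal $b_j := a_{N-j}$ with the re-indexing $(-1)^{N-k} = (-1)^N (-1)^k$ correctly reduces the increasing case to the decreasing one, giving $(-1)^N S \ge 0$. Your closing caveat is a genuine sharpening of the statement itself rather than a quibble: as written, Part 1 claims $\sgn(S) = (-1)^N$, which fails whenever $S = 0$ (constant sequence with $N$ odd, or the all-zero sequence), so the lemma is only true in the weak form $(-1)^N S \ge 0$; strictness would require an extra hypothesis, e.g.\ at least one strict inequality $a_k < a_{k+1}$ among the paired terms or a strictly positive unpaired term. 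This weak reading should be propagated to the places the paper invokes the lemma --- the sign claims $\sgn(\gamma_r) = (-1)^r$ and their variants in Theorem \ref{boundgam} and Corollary \ref{volbd}, and the inequalities in Proposition \ref{shiftgam}, are all obtained by feeding monotone nonnegative sequences into Lemma \ref{ratiosign}, so in degenerate cases they too hold only as inequalities of the form $(-1)^r \gamma_r \ge 0$.
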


	\begin{rem} \label{altshift} 
		If we had started with $k = 1$ instead in Part 2, we would switch the sign in Part 2. In general, the sign depends on whether we started with a negative or positive term of the alternating sum. However, there is no change in the statement of Part 1 even after we change the starting index for the alternating sum. \\
	\end{rem}
	
	This implies the following:
	
	\begin{thm} \label{boundgam}
		Let $B(t) = b_0 + b_1 t + \ldots + b_d t^d$ be a polynomial of degree $d$.
		
		\begin{enumerate}
			\item If $\sgn(b_k) = (-1)^k$, then $\sgn(\gamma_r) = (-1)^r$ for all $0 \le r \le \frac{d}{2}$. 
			
			\item Suppose that $b_k \ge 0$ for all $0 \le k \le d$.
			
				\begin{enumerate}
					\item If $\{ b_k \}$ is a decreasing sequence for $1 \le k \le r - 1$, then \[ \sgn  \left(   r \gamma_r - (-1)^r d \binom{d - r - 1}{r - 1} b_0  \right)  = (-1)^{r + 1} \] if $0 \le r \le \frac{d}{3}$ or $r = \frac{d}{2}$. For $r =  \frac{d}{2}$, we have that $\sgn(\gamma_{  \frac{d}{2}  }) = (-1)^r$. This also holds for $\frac{d}{3} < r < \frac{d}{2}$ if \[ \frac{b_k}{b_{k + 1}} \ge \frac{(k + 1)(r - k)}{k(d - r - k - 1)} \] for all $0 \le k \le r - 1$. \\
					
					\item If $\{ b_k \}$ is an increasing sequence for $1 \le k \le r - 1$and \[ \frac{b_{k + 1}}{b_k} \ge \frac{d - r - k - 1}{r - k} \] for all $0 \le k \le r - 1$, then $\gamma_r \le 0$. If $r = \frac{d}{2}$, we have that $\gamma_{  \frac{d}{2}  } \le 0$. \\

				\end{enumerate}
			
		\end{enumerate}
	\end{thm}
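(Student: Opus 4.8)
The plan is to feed the explicit coefficient expansion of Proposition \ref{ftypesum} into the sign criterion of Lemma \ref{ratiosign}. In every part I would first collect the coefficient of each $b_k$: writing $\beta_k = k\binom{d-r-k-1}{r-k} + d\binom{d-r-k-1}{r-k-1}$ for $k \ge 1$ and $\beta_0 = d\binom{d-r-1}{r-1}$, Proposition \ref{ftypesum} says $r\gamma_r = (-1)^r\sum_{k=0}^{r-1}(-1)^k\beta_k b_k$, and the shifted quantity $r\gamma_r - (-1)^r d\binom{d-r-1}{r-1}b_0$ equals $(-1)^r\sum_{k=1}^{r-1}(-1)^k\beta_k b_k$. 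The preliminary step is to note that for $0 \le r \le \tfrac{d}{2}$ and $0 \le k \le r-1$ every binomial occurring has top $\ge$ bottom $\ge 0$, so each $\beta_k \ge 0$; this is exactly what lets me view these as genuine alternating sums of the nonnegative terms $a_k := \beta_k b_k$, to which Lemma \ref{ratiosign} and Remark \ref{altshift} apply once monotonicity of $\{a_k\}$ is known.

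For Part 1 no monotonicity is needed. Since $\sgn(b_k)=(-1)^k$ we have $(-1)^k b_k = |b_k|$, so after factoring $(-1)^r$ every summand $(-1)^k\beta_k b_k = \beta_k|b_k|$ is nonnegative, with the $k=0$ term $d\binom{d-r-1}{r-1}|b_0|$ strictly positive; hence $r\gamma_r$ has sign exactly $(-1)^r$ for $r\ge 1$, and the boundary case $\gamma_0 = B(0)=b_0>0$ is immediate.

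For Part 2(a) I want the shifted sum $\sum_{k=1}^{r-1}(-1)^k a_k$ to be $\le 0$, so by Lemma \ref{ratiosign}(2) together with Remark \ref{altshift} (the sum starts on a negative term at $k=1$) it suffices that $a_k=\beta_k b_k$ be decreasing on $1\le k\le r-1$. I would split $\beta_k$ into its two pieces. The piece $d\binom{d-r-k-1}{r-k-1}$ has consecutive binomial ratio $\tfrac{r-k-1}{d-r-k-1}\le 1$ whenever $r\le\tfrac{d}{2}$, so against the decreasing $\{b_k\}$ it is always decreasing. The piece $k\binom{d-r-k-1}{r-k}$ has consecutive ratio $\tfrac{(k+1)(r-k)}{k(d-r-k-1)}$, which is $\le 1$ exactly when $r\le\tfrac{kd}{2k+1}$; the binding constraint is $k=1$, i.e. $r\le\tfrac{d}{3}$, which is why that threshold appears. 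For $\tfrac{d}{3}<r<\tfrac{d}{2}$ this ratio can exceed $1$, so there I would invoke the explicit hypothesis $\tfrac{b_k}{b_{k+1}}\ge\tfrac{(k+1)(r-k)}{k(d-r-k-1)}$, which is precisely the inequality forcing $k\binom{d-r-k-1}{r-k}b_k$ to decrease. A sum of decreasing nonnegative sequences is decreasing, so Lemma \ref{ratiosign}(2) closes the case. The endpoint $r=\tfrac{d}{2}$ is cleaner and handled separately: there $d-r-k-1=r-k-1$, so each first binomial $\binom{r-k-1}{r-k}$ vanishes and $\binom{r-k-1}{r-k-1}=1$, collapsing the formula to $r\gamma_r=(-1)^r d\sum_{k=0}^{r-1}(-1)^k b_k$, and decreasingness of $\{b_k\}$ with Lemma \ref{ratiosign}(2) yields $\sgn(\gamma_{d/2})=(-1)^r$.

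For Part 2(b) I keep the unshifted sum $r\gamma_r=(-1)^r\sum_{k=0}^{r-1}(-1)^k a_k$, an alternating sum of $r$ terms, and aim at Lemma \ref{ratiosign}(1): if $a_k=\beta_k b_k$ is increasing on $0\le k\le r-1$ then the sum has sign $(-1)^{r-1}$, so $r\gamma_r$ has sign $(-1)^r(-1)^{r-1}=-1$ and $\gamma_r\le 0$; the $r=\tfrac{d}{2}$ case again collapses to $(-1)^r d\sum_{k=0}^{r-1}(-1)^k b_k$ with $\{b_k\}$ increasing. The hypothesis $\tfrac{b_{k+1}}{b_k}\ge\tfrac{d-r-k-1}{r-k}$ is the one designed to make the growth of $\{b_k\}$ overcome the binomial decay inside $\beta_k$ and force $\{a_k\}$ increasing. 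The main obstacle throughout is exactly this monotonicity of the product $a_k=\beta_k b_k$: because $\beta_k$ mixes an increasing prefactor $k$ against decreasing binomial factors, its own monotonicity is delicate, and the competition between them is what produces the $\tfrac{d}{3}$-versus-$\tfrac{d}{2}$ dichotomy and fixes the ratio thresholds. The crux is to verify, piece-by-piece on the two binomial summands of $\beta_k$, that each stated hypothesis pins down the consecutive-ratio comparison $a_{k+1}/a_k\lessgtr 1$ in the direction required by the relevant clause of Lemma \ref{ratiosign}.
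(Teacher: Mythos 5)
Your Parts 1 and 2(a) are sound and essentially reproduce the paper's own argument: the paper applies Lemma \ref{ratiosign} to the two sums of Proposition \ref{ftypesum} separately, while you merge them into the single coefficient $\beta_k$ and then split piece-by-piece, which is the same computation; your threshold $r \le \tfrac{kd}{2k+1}$, binding at $k=1$, is exactly the paper's condition $k \ge \tfrac{r}{d-2r}$, and your collapse of the formula at $r=\tfrac{d}{2}$ to $(-1)^r d\sum_{k=0}^{r-1}(-1)^k b_k$ matches the paper's remark that $(-1)^r r\gamma_{d/2}$ is a positive multiple of the plain alternating sum.

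The genuine gap is in Part 2(b), and it sits precisely at the step you declare to be the crux and then postpone. You assert that the stated hypothesis $\tfrac{b_{k+1}}{b_k} \ge \tfrac{d-r-k-1}{r-k}$ forces $a_k = \beta_k b_k$ to increase, but it does not. For the piece $d\binom{d-r-k-1}{r-k-1}b_k$ the consecutive term ratio is $\tfrac{b_{k+1}}{b_k}\cdot\tfrac{r-k-1}{d-r-k-1}$, which the hypothesis only bounds below by $\tfrac{r-k-1}{r-k} < 1$; and the combined sequence fails too: taking equality $b_{k+1} = \tfrac{d-r-k-1}{r-k}\,b_k$, a direct computation using $\binom{d-r-k-1}{r-k} = \tfrac{d-2r}{r-k}\binom{d-r-k-1}{r-k-1}$ gives
\[
\beta_{k+1}b_{k+1} - \beta_k b_k \;=\; -\,\frac{2r}{r-k}\binom{d-r-k-1}{r-k-1}\,b_k \;<\; 0
\]
for $1 \le r < \tfrac{d}{2}$, so $\{a_k\}$ is strictly \emph{decreasing} there and Lemma \ref{ratiosign}(1) cannot be invoked. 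What the paper's proof actually uses is the stronger bound $\tfrac{b_{k+1}}{b_k} \ge \tfrac{d-r-k-1}{r-k-1}$ (denominator $r-k-1$, not $r-k$), under which the second sum's terms do increase, and it then verifies that the first sum's requirement $\tfrac{b_{k+1}}{b_k} \ge \tfrac{k}{k+1}\cdot\tfrac{d-r-k-1}{r-k}$ is weaker and hence redundant; the denominator $r-k$ in the theorem statement is evidently an off-by-one slip relative to the paper's own proof, which your blind reconstruction inherited because you never carried out the deferred ratio check. Your plan for 2(b) is repairable, but only after strengthening the hypothesis to the $r-k-1$ version and performing the two consecutive-ratio verifications explicitly (the endpoint $r=\tfrac{d}{2}$, where the first binomial vanishes, is unaffected).
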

	
	\begin{rem}
		Some examples where Part 1 would apply is for include $\pm \overline{\chi}_M(t)$ and a translate $\pm \overline{\chi}_M(t + 1)$ for reduced characteristic polynomials of matroids $M$ (with sign depending on the parity of the rank of $M$). As for Part 2b), we can think of face vectors where the entries increase quickly (Theorem \ref{fhex}). Examples for individual indices coming from independence complexes of matroids are in equality cases of upper bounds on face vectors of independence complexes of matroids in work of Castillo--Samper (Theorem 3.3 on p. 10 of \cite{CAS}). We can move in between cases 2a) and 2b) by looking at reciprocal polynomials. \\
	\end{rem}
	
	\begin{proof}
		Given the simplification above, it is clear that Part 1 holds. Also, Lemma \ref{ratiosign} directly implies the statements in Part 2 for $r = \frac{d}{2}$ since $(-1)^r r \gamma_r$ is a positive multiple of the alternating sum of the $b_i$ in this case.  For the remaining indices in Part 2, we start by noting that the binomial terms in each sum of Proposition \ref{ftypesum} give a decreasing sequence as the index increases since \[ \frac{ \binom{a + 1}{b + 1} }{ \binom{a}{b} } = \frac{a + 1}{b + 1} \ge 1 \] if $a \ge b \ge 0$ and $r \le \frac{d}{2}$. In Part 2a), Lemma \ref{ratiosign} implies that it is enough to say that the second sum in Proposition \ref{ftypesum} has sign $(-1)^r$. As for the first sum, we need to show that the increase given by multiplying by $\frac{k + 1}{k}$ still leaves the ratio $\le 1$. Having the ratio of the index $k$ term to the index $k + 1$ term being $\ge 1$ in the first sum of Proposition \ref{ftypesum} is equivalent to 
		
		\begin{align*}
			\frac{ k \binom{d - r - k - 1}{r - k} }{ (k + 1) \binom{d - r -  k - 2}{r - k - 1} } &=  \frac{k}{k + 1} \cdot \frac{d - r - k - 1}{r - k} \\
			&\ge 1 \\
			\Longleftrightarrow k(d - r - k - 1) &\ge (k + 1)(r - k)  \\
			dk - rk - k^2 - k &\ge rk - k^2 + r - k \\
			\Longleftrightarrow dk - rk &\ge rk + r  \\
			\Longleftrightarrow dk - 2rk &\ge r \\
			\Longleftrightarrow k(d - 2r) &\ge r \\
			\Longleftrightarrow k &\ge \frac{r}{d - 2r}. 
		\end{align*}
		
		In other words, the ``good'' indices of the first sum in Proposition \ref{ftypesum} where the coefficient of $b_k$ is larger than that of $b_{k + 1}$ are given by those where $k \ge \frac{r}{d - 2r}$. If $r \le \frac{d}{3}$, then we have that $3r \le d$ and $r \le d - 2r$. This means that $\frac{r}{d - 2r} \le 1$ and the lower bound $k \ge \frac{r}{d - 2r}$ is trivial since the sum is taken over $k \ge 1$. In these cases, the sequence of elements of the first sum in Proposition \ref{ftypesum} is decreasing and Lemma \ref{ratiosign} implies that $\sgn(\gamma_r(B)) = (-1)^r$. \\
		
		Finally, we consider Part 2b). Note that the coefficients of the $b_k$ are decreasing in the second sum in the first line of Proposition \ref{ftypesum}. In order for the terms to be increasing, we need to have \[ \frac{b_{k + 1}}{b_k} \ge \frac{ \binom{d - r - k - 1}{r - k - 1} }{ \binom{d - r - k - 2}{r - k - 2} } = \frac{d - r - k - 1}{r - k - 1}. \] If $0 \le r \le \frac{d}{3}$, the coefficients of the first sum in the first line of Proposition \ref{ftypesum} are also decreasing. So, the condition for the terms of the first sum to increase is to have \[ \frac{b_{k + 1}}{b_k} \ge \frac{k}{k + 1} \cdot \frac{d - r - k - 1}{r - k - 1}. \] However, it turns out that this condition is redundant since \[ \frac{k}{k + 1} \cdot \frac{d - r - k - 1}{r - k} \le \frac{d - r - k - 1}{r - k - 1}. \] This follows directly from clearing denominators since $k(r - k - 1) = kr - k^2 - k \le (k + 1)(r - k) = kr - k^2 + r - k$. The proof of Part 2a) implies that we have shown the statement of Part 2b) for $0 \le r \le \frac{d}{3}$. To look at the remaining indices, we can split the first sum in Proposition \ref{ftypesum} into those where $k \ge \frac{r}{d - 2r}$ (decreasing coefficients of $b_k$) and $k < \frac{r}{d - 2r}$ (increasing coefficients of $b_k$). The terms that we take alternating sums of are increasing on each part. Lemma \ref{ratiosign} then shows that the terms have sign $(-1)^{r - 1}$ and we have the claimed inequality.
		
	\end{proof}

	\subsection{Gamma vectors and (modified) $h$-vector representations} 
	
	Recall from Proposition \ref{ftypesum} that \[ r \gamma_r(B) = (-1)^r \sum_{k = 1}^{r - 1} (-1)^k k \binom{d - r - k - 1}{r - k} b_k + (-1)^r d \sum_{k = 0}^{r - 1} (-1)^k \binom{d - r - k - 1}{r - k - 1} b_k \] for a polynomial $B(u) = b_0 + b_1 u + \ldots + b_d u^d$. \\
	
	This was obtained from \[ r \gamma_r = (-1)^{r - 1} \sum_{i = 0}^{r - 1} (-1)^i \binom{d - r - 1 - i}{r - 1 - i} q_i  \] with $q_i = q_i = (i + 1) b_{i + 1} - (d - i) b_i$. \\
	
	In our analysis of the gamma vector from this perspective, we have mainly considered sums of the form \[ \sum_{i = 0}^{r - 1} (-1)^i \binom{P - i}{Q - i} M_i  \] for some $M_i \ge 0$ with one of the following collections of parameters:
	
	\begin{itemize}
			\item Expression from Proposition \ref{ftypesum}:
			
				\begin{itemize}
					\item $P = d - r - 1$, $Q = r$, $M_0 = 0$, and $M_k = k b_k$ with $b_k \ge 0$  for $1 \le k \le r - 1$ from the first sum
					
					\item $P = d - r - 1$, $Q = r - 1$, and $M_k = b_k$ with $b_k \ge 0$ for $0 \le k \le r - 1$ from the second sum
				\end{itemize}
			
			\item Older formula: $P = d - r - 1$, $Q = r - 1$, and $q_i = (i + 1) b_{i + 1} - (d - i) b_i$
	\end{itemize}

	This has a shape similar to the formula \[ h_j(\Delta) = \sum_{i = 0}^j (-1)^{j - i} \binom{d - i}{j - i} f_{i - 1}(\Delta) \] expressing the $h$-vector of a simplicial complex $\Delta$ in terms of its $f$-vector (Lemma 5.1.8 on p. 213 of \cite{BH}). We can rewrite this as \[ (-1)^j h_j (\Delta)= \sum_{i = 0}^j (-1)^i \binom{d - i}{j - i} f_{i - 1}(\Delta) \] in a way that is more similar to the sums written above. \\
	
	If $M_i = f_{i - 1}(\Delta)$ for some simplicial complex $\Delta$ of dimension $P - 1$ and $Q = r - 1$, then \[ \sum_{i = 0}^{r - 1} (-1)^i \binom{P - i}{Q - i} M_i  = (-1)^{r - 1} h_{r - 1}(\Delta). \] Note that everything is well-behaved under restrictions to $m$-skeleta. In other words, we can keep adding suitable $M_k$ for higher indices $k$ while keeping the previous ones. \\
	
	We will focus on the expression from Proposition \ref{ftypesum} since want to look at sums of positive terms and the older formula may involve some negative terms. For the second sum in Proposition \ref{ftypesum}, it suffices to have $b_k = f_{k - 1}(\Delta)$ for some simplicial complex $\Delta$. It remains to consider the first sum. The main issue is that have a sum $1 \le k \le r - 1$ (meaning that 1 or 2 terms are missing) and $Q = r$ instead of $r - 1$. To ``complete'' this to something coming from a simplicial complex, we can either write $\binom{d -r - k - 1}{r - k } = \binom{d - r - k - 1}{r - k - 1} \cdot \frac{d - 2r - 1}{r - k}$ and add one additional term for $k = 0$ or keep the given binomial coefficient $\binom{d -r - k - 1}{r - k}$ and add 2 additional terms for $k = 0$ and $k = r$. We can combine these observations with the following result:
	
	\begin{thm} (Stanley, Theorem 2.1 on p. 321 of \cite{Stfh}) \label{fhex} \\
		Let $f = (f_0, f_1, \ldots, f_{d - 1}) \in \mathbb{Z}^d$. The following two conditions are equivalent:
		
		\begin{enumerate}
			\item There exists a simplicial poset $P$ of dimension $d - 1$ with $f$-vector $f(P) = f$. 
			
			\item $f_i \ge \binom{d}{i + 1}$ for $0 \le i \le d - 1$. 
		\end{enumerate}
		
	\end{thm}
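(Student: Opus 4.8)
The plan is to prove the two implications separately. I recall that a simplicial poset of dimension $d-1$ is a finite poset $P$ with minimum $\hat{0}$ in which every principal lower interval $[\hat{0}, x]$ is isomorphic to a Boolean lattice, with $f_i$ counting the elements of rank $i+1$ (the faces of dimension $i$) and with maximal rank $d$. The implication $1 \Rightarrow 2$ will be immediate from the axioms, while $2 \Rightarrow 1$ requires an explicit construction that exploits the feature distinguishing simplicial posets from simplicial complexes, namely that two distinct faces may share the same vertex set and the same boundary.

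For $1 \Rightarrow 2$ I would argue as follows. Since $\dim P = d-1$, there is an element $\tau$ of rank $d$. Because $P$ is a simplicial poset, $[\hat{0}, \tau]$ is isomorphic to the Boolean lattice on $d$ atoms, which has exactly $\binom{d}{i+1}$ elements of rank $i+1$. These give $\binom{d}{i+1}$ distinct faces of dimension $i$ in $P$, so $f_i \ge \binom{d}{i+1}$ for every $0 \le i \le d-1$.

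For $2 \Rightarrow 1$ I would build $P$ explicitly. Start from a single $(d-1)$-simplex $\sigma$ on the vertex set $\{1, \dots, d\}$, whose face numbers are precisely $\binom{d}{i+1}$. The engine of the construction is a \emph{doubling} move: given an already-present $i$-face $\mu$, adjoin a new element $\rho$ and declare its lower interval to be $[\hat{0}, \rho] = [\hat{0}, \mu) \cup \{\rho\}$, with $\rho$ incomparable to everything else and maximal in $P$. Then $[\hat{0}, \rho]$ is again Boolean of rank $i+1$, so $\rho$ is a legitimate new $i$-face; since its entire boundary is shared with $\mu$ and nothing is placed above it, this move increases $f_i$ by exactly $1$ and leaves every other $f_j$ unchanged. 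Because $\binom{d}{i+1} \ge 1$, there is always an original $i$-face of $\sigma$ available to double, and performing the move $e_i := f_i - \binom{d}{i+1} \ge 0$ times for each $i$ in turn produces a simplicial poset of dimension $d-1$ whose face vector is exactly $f$.

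The main point requiring care — though it is not deep — is the verification that each doubling move preserves the defining property and touches only one coordinate of the $f$-vector. Concretely, I would check that after adjoining $\rho$ the relation is still a partial order, that $[\hat{0}, x]$ stays Boolean for every $x$ (it is unchanged for $x \ne \rho$ because $\rho$ is maximal, and it is Boolean of rank $i+1$ for $x = \rho$ by construction), and that no lower or higher faces are created. I would also emphasize that the statement carries no purity or connectivity hypothesis, so the resulting maximal lower-dimensional faces cause no difficulty; this is exactly the place where the simplicial poset setting, rather than the simplicial complex setting, is essential, since repeating vertex sets is precisely what a complex forbids.
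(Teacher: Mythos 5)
The paper does not prove this statement at all: it is quoted verbatim as Stanley's Theorem 2.1 from \cite{Stfh}, so there is no internal proof to compare against. Your argument is correct and is essentially Stanley's original one — necessity from the Boolean interval $[\hat{0},\tau]\cong B_d$ below a rank-$d$ element, which forces $f_i \ge \binom{d}{i+1}$, and sufficiency by starting from a single $(d-1)$-simplex and adjoining, for each $i$, exactly $f_i - \binom{d}{i+1}$ new maximal rank-$(i+1)$ elements whose lower interval is the boundary of an existing $i$-face together with the new element. Your verification of the doubling move is the right point to stress: since the new element $\rho$ is maximal, no interval $[\hat{0},x]$ with $x \ne \rho$ changes, $[\hat{0},\rho]$ is Boolean by construction, and only $f_i$ moves, by exactly one; and your closing observation — that simplicial posets carry no purity hypothesis, so the non-pure poset produced (e.g.\ a doubled vertex that is maximal) is legitimate, whereas a simplicial complex would forbid repeated vertex sets — is precisely why the theorem holds in this setting and fails for complexes.
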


	In this setting, we can express $r \gamma_r$ in terms of the degree $r - 1$ or $r$ part of the $h$-vector or $f$-vector of two auxiliary simplicial posets.
	
	\begin{prop} \label{gamauxpo}
		Let $B(u) = b_0 + b_1 u + \ldots + b_d u^d$ be a polynomial with nonnegative coefficients $b_i \ge 0$. 
		
		\begin{enumerate}
			\item If the $b_i$ are sufficiently large, there are auxiliary $(d - r - 2)$-dimensional simplicial posets $P$ and $Q$ such that \[ r \gamma_r = h_r(P) - f_{r - 1}(P) - d h_{r - 1}(Q) - (-1)^r \binom{d - r - 1}{r}. \] 
			
			\item If the $b_i$ are sufficiently large and each $b_i$ is a multiple of $r - i$, there are auxiliary $(d - r - 2)$-dimensional simplicial posets $R$ and $S$ such that \[ r \gamma_r = - h_{r - 1}(R) - d h_{r - 1}(S) - (-1)^r \binom{d - r - 1}{r - 1} .  \]
		\end{enumerate}
	\end{prop}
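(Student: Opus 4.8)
The plan is to start from the closed form for $r\gamma_r$ in Proposition \ref{ftypesum} and to recognize each of its two alternating sums as, up to an overall sign and a small number of ``missing'' boundary terms, the evaluation of an $h$-vector of an auxiliary simplicial poset. The engine is the identity $(-1)^j h_j(\Delta) = \sum_{i=0}^j (-1)^i \binom{D-i}{j-i} f_{i-1}(\Delta)$ for a $(D-1)$-dimensional complex, together with Stanley's realizability criterion (Theorem \ref{fhex}), which guarantees that any candidate $f$-vector $(g_0, \dots, g_{D-1})$ with $g_i \ge \binom{D}{i+1}$ is the $f$-vector of some simplicial poset. Throughout I would take $D = d-r-1$, so that the ambient posets have dimension $D-1 = d-r-2$, matching the binomials $\binom{d-r-k-1}{\,\cdot\,}$ that appear in both sums.

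First I would dispose of the term common to both parts, namely the second sum $\sum_{k=0}^{r-1}(-1)^k \binom{d-r-k-1}{r-k-1} b_k$. Matching it against the $h$-formula with $j = r-1$ and $f_{k-1} = b_k$, this sum equals $(-1)^{r-1} h_{r-1}(S)$ where $S$ is a poset with $f_{k-1}(S) = b_k$; multiplying by $(-1)^r d$ yields the $-d\,h_{r-1}(Q)$ (resp.\ $-d\,h_{r-1}(S)$) term. The only subtlety here is the $k=0$ slot: since $f_{-1} = 1$ always, this realization forces the normalization $b_0 = 1$ (as is natural for $h$-polynomials of complexes), so that the $k=0$ term $\binom{d-r-1}{r-1} b_0$ is correctly produced by $f_{-1}(S)=1$ with no leftover constant.

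The heart of the argument is the first sum, where the two parts diverge. For Part 1 I would keep the binomial $\binom{d-r-k-1}{r-k}$ (so $j=r$) and set $f_{k-1}(P) = k b_k$; the sum as written runs only over $1 \le k \le r-1$, so it is the full $h_r$-sum with the $k=0$ and $k=r$ terms deleted. Restoring them, the $k=0$ term contributes the explicit constant $\binom{d-r-1}{r}$, while the $k=r$ term contributes exactly $(-1)^r f_{r-1}(P)$ since $\binom{d-2r-1}{0}=1$; solving for the deleted sum and multiplying by $(-1)^r$ then gives precisely $h_r(P) - f_{r-1}(P) - (-1)^r\binom{d-r-1}{r}$, and one checks that the combination $h_r(P) - f_{r-1}(P)$ is independent of the free value $f_{r-1}(P)$. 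For Part 2 I would instead rewrite $\binom{d-r-k-1}{r-k} = \frac{d-2r}{r-k}\binom{d-r-k-1}{r-k-1}$, realigning the bottom index to $r-k-1$ (so $j=r-1$) at the cost of the factor $\frac{k(d-2r)}{r-k}$; setting $f_{k-1}(R) = \frac{k(d-2r)}{r-k} b_k$ and completing only the $k=0$ term produces $-h_{r-1}(R) - (-1)^r \binom{d-r-1}{r-1}$. The divisibility hypothesis that $r-k$ divides $b_k$ is exactly what makes each $f_{k-1}(R)$ an integer.

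The step I expect to be the main obstacle, and which governs the ``sufficiently large'' hypotheses, is checking realizability: I must verify that the prescribed values ($k b_k$, $b_k$, and $\frac{k(d-2r)}{r-k}b_k$ respectively) are nonnegative integers satisfying Stanley's bound $f_i \ge \binom{d-r-1}{i+1}$ for every relevant $i$, so that Theorem \ref{fhex} genuinely furnishes the posets $P,Q,R,S$ (with the higher, unconstrained face numbers set to any admissible values); this is where the lower bounds on the $b_i$ enter quantitatively, e.g.\ $k b_k \ge \binom{d-r-1}{k}$ for $P$. I would also flag the degenerate boundary $r = \tfrac{d}{2}$, where $d-2r=0$ collapses the first sum entirely (each $\binom{r-k-1}{r-k}$ vanishes) and the statement reduces to the alternating-sum analysis already handled via Lemma \ref{ratiosign}, and where the range condition $r-1 \le d-r-2$ needed to place the index $f_{r-1}(P)$ must be read through the usual convention that out-of-range face numbers are zero.
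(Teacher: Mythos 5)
Your proposal follows essentially the same route as the paper: starting from Proposition \ref{ftypesum}, you match each alternating sum against the identity $(-1)^j h_j(\Delta) = \sum_{i}(-1)^i \binom{D-i}{j-i} f_{i-1}(\Delta)$ with $D = d-r-1$, complete the missing $k=0$ (and, in Part 1, $k=r$) boundary terms, and invoke Stanley's Theorem \ref{fhex} to realize the prescribed face numbers once the $b_i$ are sufficiently large --- exactly the paper's argument. You even supply details the paper's terse proof glosses over, such as the correct conversion factor $\binom{d-r-k-1}{r-k} = \frac{d-2r}{r-k}\binom{d-r-k-1}{r-k-1}$ (the paper's preceding discussion misstates it as $\frac{d-2r-1}{r-k}$), the $f_{-1}=1$ normalization at the $k=0$ slot, the cancellation making $h_r(P) - f_{r-1}(P)$ independent of the free value $f_{r-1}(P)$, and the degenerate boundary $r = \tfrac{d}{2}$.
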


	\begin{proof}
		\begin{enumerate}
			\item Taking the missing $k = 0$ and $k = r$ terms into account, Theorem \ref{fhex} implies that there are $(d - r - 2)$-dimensional simplicial posets $P$ and $Q$ such that 
			
			\begin{align*}
				r \gamma_r(B) &= (-1)^r \left( (-1)^r h_r(P) - \binom{d - r - 1}{r} - (-1)^r f_{r - 1}(P) + (-1)^{r - 1} d h_{r - 1}(Q) \right) \\
				&= h_r(P) - f_{r - 1}(P) - d h_{r - 1}(Q) - (-1)^r \binom{d - r - 1}{r}.
			\end{align*}
		
			\item Similarly, we have $(d - r - 2)$-dimensional simplicial posets $R$ and $S$ such that 
			
			\begin{align*}
				r \gamma_r &= (-1)^r \left(  (-1)^{r - 1} h_{r - 1}(R) - \binom{d - r - 1}{r - 1} + (-1)^{r - 1} d h_{r - 1}(S) \right) \\
				&= - h_{r - 1}(R) - d h_{r - 1}(S) - (-1)^r \binom{d - r - 1}{r - 1} 
 			\end{align*}
		
		\end{enumerate}
	\end{proof}
	
	\color{black}
	
	\section{ Intersection numbers and volume polynomials }
	Following up to considering the shape and face vector interpretation of the sums used to study bounds on the gamma vector, we will look into sources of related algebraic structure. For example, some of our recurring examples involved independence complexes and broken circuit complexes of matroids (e.g. from characteristic polynomials). A special property of the $f$-vectors of independence complexes and $h$-vectors of broken circuit complexes is that they can be considered as intersection numbers by work of Adiprasito--Huh--Katz \cite{AHK} and Ardila--Denham--Huh (\cite{ADH1}, \cite{ADH2}). More specifically, we have that $f_i(IN(M)) = \alpha^{d - i} \beta^i$ in the Chow ring of the matroid $M$ and $h_i(BC(M)) = \gamma^i \delta^{n - i - 1}$ in the conormal Chow ring of the matroid.  \\
	
	In this context, it is natural to ask how the gamma vector behaves for other polynomials involving intersections of divisors (i.e. with coefficients from top degree intersections). For a specialization of the volume polynomial, the gamma vector plays a similar role to the $g$-vector of the broken circuit complex. We also consider examples related to how the sign of the gamma vector is related to positivity of divisors on projective space and give a geometric interpretation in terms of Segre classes of vector bundles. \\

	More specifically, we will consider a specialization of the volume polynomial.
	
	\begin{defn}
		Given divisors $A, B$ on a (smooth projective) variety $X$, let $V(u) = (uA + B)^d$ be the \textbf{specialized volume polynomial}.
	\end{defn}

	 The polynomial $V$ will take the role of $h(u)$ used to define the gamma vector. Recall  from Theorem \ref{gamderco} that \[ r \gamma_r = [u^{r - 1}] J'(u) (1 + u)^{2r}, \] where \[ J(u) = \frac{V(u)}{(1 + u)^d}. \] 
	
	Then, we have the following:
	
	\begin{prop} \label{volgamnum}
		Let $V(u) = (uA + B)^d$ for divisors $A$ and $B$ on a smooth projective variety $X$. Then, we have that \[ r \gamma_r = [u^{r - 1}] \frac{Q(u)}{(1 + u)^{d - 2r + 1}}, \] where $Q(u) = \sum_{k = 0}^d q_k u^k$ with \[ q_k = d \binom{d - 1}{k} (A^{k + 1} B^{d - k - 1} - A^k B^{d - k}). \] Also, $r \gamma_r = h(P)$ for some auxiliary $(d - r - 2)$-dimensional simplicial poset $P$ when $d$ is sufficiently large and the $A^k B^{d - k}$ are increasing for $0 \le k \le r - 1$.   
	\end{prop}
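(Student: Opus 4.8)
The plan is to reduce the statement to the general derivative description of the gamma vector already established and then carry out the intersection-theoretic bookkeeping. First I would invoke the identity
\[ r\gamma_r = [u^{r-1}]\frac{(1+u)V'(u) - dV(u)}{(1+u)^{d-2r+1}}, \]
which is the specialization to the input polynomial $V$ of the formula recalled at the start of the ``Upper/lower bounds and general gamma vectors'' subsection, so that $Q(u) = (1+u)V'(u) - dV(u)$ is exactly the asserted numerator. Expanding $V(u) = (uA+B)^d = \sum_{k=0}^d \binom{d}{k}(A^k B^{d-k})\,u^k$, where $A^k B^{d-k}$ denotes the top intersection number, the coefficient of $u^k$ in $Q(u)$ is
\[ q_k = (k+1)\binom{d}{k+1}A^{k+1}B^{d-k-1} - (d-k)\binom{d}{k}A^k B^{d-k}, \]
which is precisely the general relation $q_k = (k+1)b_{k+1} - (d-k)b_k$ from the proof of Proposition \ref{ftypesum} with $b_k = \binom{d}{k}A^k B^{d-k}$.

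Then I would simplify the two binomial factors using the elementary identities $(k+1)\binom{d}{k+1} = d\binom{d-1}{k}$ and $(d-k)\binom{d}{k} = d\binom{d-1}{k}$, both of which equal $\tfrac{d!}{k!\,(d-k-1)!}$. Since the two coefficients collapse to the same quantity $d\binom{d-1}{k}$, this yields $q_k = d\binom{d-1}{k}(A^{k+1}B^{d-k-1} - A^k B^{d-k})$ and hence the first claim; this step is routine.

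For the second claim, I would start from the expansion $r\gamma_r = (-1)^{r-1}\sum_{i=0}^{r-1}(-1)^i\binom{d-r-1-i}{r-1-i}q_i$ derived in Proposition \ref{ftypesum}. The increasing hypothesis on the intersection numbers makes each difference $A^{i+1}B^{d-i-1} - A^i B^{d-i}$ nonnegative, so every $q_i \ge 0$; this is the one place where the geometry, rather than pure combinatorics, enters, and I would note that nonnegativity over the full range $0 \le i \le r-1$ really requires the sequence $A^k B^{d-k}$ to be nondecreasing up through $k = r$. Taking $d$ large forces each $q_i$ to exceed the threshold $\binom{d-r-1}{i}$ of Theorem \ref{fhex}, so there is a $(d-r-2)$-dimensional simplicial poset $P$ with $f_{i-1}(P) = q_i$; feeding this into the $f$-to-$h$ conversion $(-1)^{r-1}h_{r-1}(P) = \sum_{i=0}^{r-1}(-1)^i\binom{d-r-1-i}{r-1-i}f_{i-1}(P)$ identifies $r\gamma_r$ with the single $h$-vector entry $h_{r-1}(P)$ abbreviated as $h(P)$ in the statement. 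This case needs only one auxiliary poset, in contrast to Proposition \ref{gamauxpo}, precisely because the volume-polynomial coefficients $q_k$ are already a single nonnegative quantity rather than a two-term combination of the $b_k$.

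The main obstacle is the bookkeeping at the bottom of the $f$-vector: the $i=0$ term forces $f_{-1}(P) = q_0 = d(AB^{d-1} - B^d)$, whereas the empty-face convention fixes $f_{-1} = 1$. Reconciling these is the delicate point, and I expect to handle it exactly as in Proposition \ref{gamauxpo}, either by normalizing so that $q_0$ plays the role of the empty face or by peeling off the $i=0$ contribution as an explicit $\binom{d-r-1}{r-1}$-type correction term and realizing only the remaining $q_i$ with $i \ge 1$ as a genuine $f$-vector. Checking that ``sufficiently large $d$'' simultaneously secures all the realizability inequalities of Theorem \ref{fhex} is the other point requiring care, but it is quantitative rather than conceptual.
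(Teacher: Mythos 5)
Your proposal is correct and follows essentially the same route as the paper: the paper likewise takes $Q(u) = (1+u)V'(u) - dV(u)$, computes $q_k$ by differentiating $V(u) = (uA+B)^d$ and collapsing the binomials via Pascal's identity (your route through $q_k = (k+1)b_{k+1} - (d-k)b_k$ with $b_k = \binom{d}{k}A^kB^{d-k}$ is the same computation), and disposes of the second claim in one line as ``an application of Theorem \ref{fhex} as in Proposition \ref{gamauxpo}.'' In fact you supply more detail than the paper does on that second step --- in particular your observation about the $i = 0$ term, where $f_{-1}(P) = 1$ clashes with $q_0$ and forces a $\binom{d-r-1}{r-1}$-type correction as in Proposition \ref{gamauxpo}, is a genuine gap in the paper's bare statement $r\gamma_r = h(P)$ that your proposal correctly identifies and patches.
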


	\begin{rem}
		The second part was motivated by $f$-vectors of independence complexes of matroids (use shellability and Proposition 3 on p. 3931 of \cite{Cha}, \cite{AHK}) and $h$-vectors of broken circuit complexes of matroids and the analogue of this setup for divisors of the conormal Chow ring of a matroid (see \cite{JKL}, \cite{CAS}, \cite{ADH1}, \cite{ADH2}).. Some related further discussion is in Part 2 of Remark \ref{posmat}.
	\end{rem}
	
	\color{black} 
	\begin{proof}
		Let $Q(u) = (1 + u)V'(u) - d V(u)$ be the numerator of $J'(u)$ with denominator $(1 + u)^{d - 2r + 1}$ and $q_k$ be the coefficient of $u^k$ in $Q(u)$. Since $V'(u) = d A (tA + B)^{d - 1}$, we have that 
		
		\begin{align*}
			q_k &= [u^k] V'(u) + [u^{k - 1}] V'(u) + d [u^k] V(u) \\
			&= d \binom{d - 1}{k} A \cdot A^k B^{d - k - 1} + d \binom{d - 1}{k - 1} A \cdot A^{k - 1} B^{d - k} - d \binom{d}{k} A^k B^{d - k} \\
			&= d \binom{d - 1}{k} A^{k + 1} B^{d - k - 1} + d \binom{d - 1}{k - 1} A^k B^{d - k} - d \binom{d}{k} A^k B^{d - k} \\
			&= d \left( \binom{d - 1}{k} A^{k + 1} B^{d - k - 1} + \binom{d - 1}{k - 1} A^k B^{d - k} - \binom{d}{k} A^k B^{d - k} \right) \\
			&= d \left( \binom{d - 1}{k} A^{k + 1} B^{d - k - 1} - \binom{d - 1}{k} A^k B^{d - k} \right) \\
			&= d \binom{d - 1}{k} (A^{k + 1} B^{d - k - 1} - A^k B^{d - k}).
		\end{align*}
	
		The second statement is an application of Theorem \ref{fhex} as in Proposition \ref{gamauxpo}.
	\end{proof}

	We now consider a family of examples where the signs of the components gamma vector of the volume polynomial has a direct connection with postivity properties of the divisors involved. 
	
	
	\begin{exmp} \textbf{(Gamma vector signs and multiples)} \label{gamvolmult} \\
		
		To get an idea about quantitative properties related to signs of gamma vectors of volume polynomials, we consider the special case where $A^{k + 1} B^{d - k - 1}/A^k B^{d - k}$ doesn't depend on $k$. This includes cases where $A$ is a constant multiple $\rho$ of $B$ such as divisors $A, B$ on $\mathbb{P}^n$. In this setting, Proposition \ref{volgamnum} implies that \[ \frac{q_{k + 1}}{q_k} = \frac{d - k - 1}{k + 1} \rho \] and that our starting point/constant is $q_0 = d(AB^{d - 1} - B^d)$ and $q_k = \rho^k q_0 = d(AB^{d - 1} - B^d)$ for $0 \le k \le r - 1$. Since $0 \le k \le r - 1$ and $0 \le r \le \frac{d}{2}$, we have that $\frac{d - k - 1}{k + 1} \ge 1$. Recall that \[  r \gamma_r =  (-1)^{r - 1} \sum_{i = 0}^{r - 1} (-1)^i \binom{d - r-  i - 1}{r - i - 1} q_i. \] This means that the $(k + 1)^{\text{th}}$ term divided by the $k^{\text{th}}$ term (assuming latter nonzero) is \[ \frac{r - k - 1}{d - r- k - 1} \cdot \frac{d - k - 1}{k + 1} \rho. \] The binomial coefficients $\binom{d - r - i - 1}{r - i - 1}$ are decreasing as we increase $i$. Lemma \ref{ratiosign} then implies the following:
		
		\begin{enumerate}
			\item If $\rho < 0$ or $0 < \rho \ll 1$, then $\sgn(\gamma_r) = (-1)^{r - 1} \sgn(q_0) = (-1)^{r - 1} \sgn(AB^{d - 1} - B^d)$. This is from directly evaluating terms or using Lemma \ref{ratiosign}.
			
			\item If $\rho = 0$, then $\sgn(\gamma_r) = \sgn(q_0) = (-1)^{r - 1} \sgn(AB^{d - 1} - B^d)$.
			
			\item If $\rho \gg 0$, then $\sgn(\gamma_r) = \sgn(q_0) = \sgn(AB^{d - 1} - B^d)$. More specifically, it suffices to have $\rho$ such that \[ \frac{d - k - 1}{k + 1} \rho > \frac{d - r- k - 1}{r - k - 1} \] for $1 \le k \le r - 2$. Since $\frac{d - k - 1}{k + 1} \ge 1$, we can take $\rho > d - 2r + 1$ for a particular $r$ and $\rho > d$ for all these conditions to hold at once.

		\end{enumerate}
		
		In this setting, the sign of the gamma vector components is a measure of positivity properties of the common ratio term.
		
	\end{exmp}

	\begin{rem} \label{posmat} ~\\
		\vspace{-3mm}
		\begin{enumerate}
			\item There are some related sums involving a mixture of terms of the form $\rho^i$ and $f$-vectors/$h$-vectors in work of Chari (Proposition 3 on p. 3931 of \cite{Cha}) and Brown--Colbourne (Thoerem 3.1 on p. 575 of \cite{BC}). Also, more general positivity properties are connected to alternating sums of multiples of positive intersections of the form $A^k B^{n - k}$ (e.g.  in the context of gamma vectors). If $A$ and $B$ are nef, the sequence $a_k = A^k B^{d - k}$ forms a log concave sequence. Recall that a sequence $\{ a_k \}_{k = 1}^N$ is log concave if $a_k^2 \ge a_{k - 1} a_{k - 1}$ for all $2 \le k \le N - 1$. Since this is equivalent to $\frac{a_k}a_{k - 1} \ge \frac{a_{ k + 1 }}{a_k}$, the upper bound condition $\frac{a_{i + 1}}{a_i} \le C$ holds for all $1 \le i \le N - 1$ if and only if $\frac{a_1}{a_2} \le C$. Similarly, the lower bound condition $\frac{a_{i + 1}}{a_i} \ge C$ holds for $1 \le i \le N - 1$ if and only if $\frac{a_N}{a_{N - 1}} \ge C$. The is a different perspective for the case of nef divisors leading to connections to Schur positivity mentioned at the end of Example \ref{gamseg}. \\

			\item An analogue of this applies to $f$-vectors of independence complexes and $h$-vectors of broken circuit complexes of matroids by work of Adiprasito--Huh--Katz \cite{AHK} and Ardila--Denham--Huh (\cite{ADH1}, \cite{ADH2}).  In addition, we can make some observations on the meaning of the terms in Proposition \ref{volgamnum} in this setting. The entries of the $h$-vector of the broken circuit complex of the matroid are increasing from indices between 1 and $\frac{d}{2}$ (\cite{JKL}, \cite{CAS}). Also, the differences $A^{k + 1} B^{d - k - 1} - A^k B^{d - k}$ form $O$-sequence and give the $h$-vector of some Cohen--Macaulay simplicial complex in the case of independence complexes of matroids (Theorem 4.3 on p. 374 of \cite{Swg}, \cite{JKL}). \\
		\end{enumerate}

	\end{rem}
	
	The gamma vector associated to a volume polynomial specialization satisfies similar properties to those of $f$-vectors of independence complexes, $h$-vectors of broken circuit complexes, and characteristic polynomials of matroids studied in the previous section. \\
	
	\begin{cor} \label{volbd} ~\\
		Let $P_k = A^{k + 1} B^{d - k - 1} - A^k B^{d - k}$. The gamma vector associated the volume polynomial specialization satisfies the following properties: \\
		
		\begin{enumerate} 
			
			\item If $A^k B^{d - k}$ is a negative multiple of $A^{k + 1} B^{d - k - 1}$ for each $k$ (e.g. if $A$ is a negative multiple of $B$), the sign of $\gamma_r$ is $(-1)^{r - 1}$.
			
			\item If \[ \frac{d - k - 1}{k + 1} \frac{q_{k + 1}}{q_k} > \frac{d - r- k - 1}{r - k - 1}, \] then $\sgn(\gamma_r) = \sgn(q_0)$. 
			
			\item If the $A^k B^{d - k}$ are nonnegative and decreasing as $k$ increases, then $\sgn(\gamma_r) = (-1)^r$. 
			
		\end{enumerate}
	\end{cor}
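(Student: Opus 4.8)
The plan is to reduce all three claims to the sign of a single weighted alternating sum and then read that sign off using Lemma~\ref{ratiosign}. Recall from Proposition~\ref{volgamnum} (in the form already used in Example~\ref{gamvolmult}) that
\[ r\gamma_r = (-1)^{r-1}\sum_{i=0}^{r-1}(-1)^i\binom{d-r-i-1}{r-i-1}q_i, \qquad q_k = d\binom{d-1}{k}\bigl(A^{k+1}B^{d-k-1}-A^kB^{d-k}\bigr). \]
Since $r\le\frac d2$ we have $d-r-i-1\ge r-i-1\ge 0$ for $0\le i\le r-1$, so the binomial weights $\binom{d-r-i-1}{r-i-1}$ are nonnegative and (as noted in the proof of Theorem~\ref{boundgam}) nonincreasing in $i$. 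Thus in every case $\sgn(\gamma_r)$ equals the sign of $(-1)^{r-1}\sum_{i=0}^{r-1}(-1)^i a_i$ with $a_i=\binom{d-r-i-1}{r-i-1}q_i$, and the three parts correspond to three qualitatively different sign patterns of the $q_i$.

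For Part~(1), the hypothesis that each $A^kB^{d-k}$ is a negative multiple of $A^{k+1}B^{d-k-1}$ says precisely that consecutive intersection numbers have opposite signs, i.e. $\sgn(A^kB^{d-k})=(-1)^k\sgn(B^d)$. A short check then gives $\sgn(P_k)=(-1)^{k+1}\sgn(B^d)$ and hence $\sgn(q_k)=(-1)^k\sgn(q_0)$. Consequently every summand $(-1)^i a_i$ carries the \emph{same} sign $\sgn(q_0)$: there is no cancellation, the sum has sign $\sgn(q_0)$, and the prefactor yields $\sgn(\gamma_r)=(-1)^{r-1}\sgn(q_0)$, the asserted sign (the model $A=\rho B$ with $\rho<0$ is exactly Example~\ref{gamvolmult}(1)). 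No monotonicity input is needed here.

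For Parts~(2) and~(3) the $q_i$ no longer alternate, so the sum is a genuine alternating sum and I would invoke Lemma~\ref{ratiosign}. In Part~(2) all $q_i$ share one sign, and the stated ratio inequality rearranges — via the binomial ratios $\tfrac{r-i-1}{d-r-i-1}$ and $\tfrac{d-i-1}{i+1}$ — to the statement that the weighted magnitudes $|a_i|$ form an \emph{increasing} sequence; Lemma~\ref{ratiosign}(1) then forces the sum to have sign $(-1)^{r-1}$, which cancels the prefactor to give $\sgn(\gamma_r)=\sgn(q_0)$. In Part~(3) the hypothesis that the $A^kB^{d-k}$ are nonnegative and decreasing gives $P_k\le 0$, hence $q_k\le 0$ for all $k$; writing $q_k=-|q_k|$ turns the identity into $r\gamma_r=(-1)^r\sum_{i=0}^{r-1}(-1)^i b_i$ with $b_i=\binom{d-r-i-1}{r-i-1}|q_i|\ge 0$, and Lemma~\ref{ratiosign}(2) yields $\sum(-1)^i b_i\ge 0$, hence $\sgn(\gamma_r)=(-1)^r$ whenever $\gamma_r\ne 0$.

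The main obstacle is the monotonicity needed to apply Lemma~\ref{ratiosign} in Part~(3): unlike Part~(2), the bare assumption ``decreasing and nonnegative'' does not by itself force $b_i$ to be a decreasing sequence, since the weight $b_i$ mixes the nonincreasing factor $\binom{d-r-i-1}{r-i-1}$ with the increasing factor $\binom{d-1}{i}$ and with the differences $A^iB^{d-i}-A^{i+1}B^{d-i-1}$, whose ratios are uncontrolled. I would handle this by reducing the decreasingness of $b_i$ to the per-index bound $\tfrac{r-i-1}{d-r-i-1}\cdot\tfrac{d-i-1}{i+1}\cdot\tfrac{P_{i+1}}{P_i}<1$ — the exact analogue of the condition in Part~(2) — and verifying it in the relevant regime, which includes the small-ratio model $A=\rho B$, $0<\rho\ll 1$, of Example~\ref{gamvolmult} that this part generalizes.
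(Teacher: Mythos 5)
Your Parts (1) and (2) run along essentially the same lines as the paper's proof. The paper likewise expands $\frac{1}{(1+u)^{d-2r+1}}$ via negative binomial coefficients to get $r\gamma_r = (-1)^{r-1}\sum_{i=0}^{r-1}(-1)^i\binom{d-r-i-1}{r-i-1}q_i$, observes that the weights $\binom{d-r-i-1}{r-i-1}$ decrease in $i$, and concludes Part (1) exactly as you do: $\sgn(q_k)=(-1)^k$ makes every summand carry the same sign, so no appeal to Lemma \ref{ratiosign} is needed. For Part (2) your use of Lemma \ref{ratiosign} matches the intended argument from Example \ref{gamvolmult}, though note that you silently repair the statement: since $\frac{q_{k+1}}{q_k}=\frac{d-k-1}{k+1}\cdot\frac{P_{k+1}}{P_k}$, the extra factor $\frac{d-k-1}{k+1}$ in the displayed hypothesis only makes sense if the ratio there is read as $P_{k+1}/P_k$ (the analogue of $\rho$ in Example \ref{gamvolmult}); taken literally with $q_{k+1}/q_k$ it double-counts that factor and is weaker than what the term-ratio comparison $\frac{r-k-1}{d-r-k-1}\cdot\frac{q_{k+1}}{q_k}>1$ requires.

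The genuine gap is Part (3), and your proposed repair does not close it. The hypothesis ``$A^kB^{d-k}$ nonnegative and decreasing'' places no constraint at all on the ratios $P_{i+1}/P_i$ --- the differences of a nonnegative decreasing sequence are arbitrary nonnegative numbers --- so your per-index bound $\frac{r-i-1}{d-r-i-1}\cdot\frac{d-i-1}{i+1}\cdot\frac{P_{i+1}}{P_i}<1$ simply fails whenever $P_0$ is much smaller than $P_1$; there is no ``relevant regime'' to retreat to that covers the stated hypothesis. Concretely, for $r=2$ the identity gives $2\gamma_2 = q_1-(d-3)q_0$, and when $AB^{d-1}$ is very close to $B^d$ while $A^2B^{d-2}$ drops sharply, the right-hand side is negative; so no argument that works only with term-ratio monotonicity of the $q_i$ can establish Part (3). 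The paper's proof does something structurally different at exactly this point: it substitutes $q_k = d\binom{d-1}{k}\left(A^{k+1}B^{d-k-1}-A^kB^{d-k}\right)$ and regroups (a summation by parts), so that the alternating sum runs over the intersection numbers $A^kB^{d-k}$ themselves with combined weights $\binom{d-r-1-k}{r-1-k}\left(\binom{d-1}{k}-\binom{d-1}{k-1}\right)$, and the decreasing hypothesis of Part (3) then bears directly on the summands rather than on their uncontrolled first differences. That regrouping is the missing idea in your write-up --- though you should be aware that even in the paper the subsequent monotonicity check needed to invoke Lemma \ref{ratiosign} on the regrouped sum is left implicit, so Part (3) deserves a careful verification (boundary terms of the summation by parts included) rather than a one-line citation of the lemma.
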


	\color{black} 
	\begin{proof}
		In the notation above, we have 
		
		\begin{align*}
			r \gamma_r = [u^{r - 1}] \frac{Q(u)}{(1 + u)^{d - 2r + 1}} \\
			= \sum_{i + j = r - 1, i, j \ge 0} q_i \binom{-(d - 2r + 1)}{j} \\
			= (-1)^{r - 1} \sum_{i + j = r - 1, i, j \ge 0} (-1)^i q_i \binom{d - 2r + j}{j}  \\
			= (-1)^{r - 1} \sum_{i = 0}^{r - 1} (-1)^i \binom{d - r-  i - 1}{r - i - 1} q_i \\
			= (-1)^{r - 1} \sum_{i = 0}^{r - 1} (-1)^i \binom{d - r-  i - 1}{r - i - 1} q_i  \\
			= (-1)^{r - 1} d \sum_{k = 0}^{r -1} (-1)^k \binom{d - r - 1 - k}{r - 1 - k} \binom{d - 1}{k} (A^{k + 1} B^{d - k - 1} - A^k B^{d - k}) \\
			= (-1)^{r - 1} d \sum_{k = 0}^{r - 1} \binom{d - r - 1 - k}{r - 1 - k} \left(  \binom{d - 1}{k - 1} - \binom{d - 1}{k} \right) A^k B^{d - k} \\
			= (-1)^r  d \sum_{k = 0}^{r - 1} (-1)^k \binom{d - r - 1 - k}{r - 1 - k} \left( \binom{d - 1}{k} - \binom{d - 1}{k - 1} \right) A^k B^{d - k}
		\end{align*}
		
		for $0 \le r \le \frac{d}{2}$. Note that $\binom{d - r-  i - 1}{r - i - 1}$ decreases as $i$ increases since \[ \frac{\binom{a + 1}{b + 1}}{\binom{a}{b}} = \frac{a + 1}{b + 1} \ge 1 \] when $a, b \ge 0$. If $\sgn(q_k) = (-1)^k$ (e.g. when $A$ is a negative multiple of $B$), then $\sgn(\gamma_r) = (-1)^{r - 1}$ since each term of the sum is nonnegative. \\
		
		For $r = \frac{d}{2}$, we have
		
		\begin{align*}
			\frac{d}{2} \gamma_{\frac{d}{2}} &= (-1)^{r - 1} \sum_{ i = 0}^{r - 1} (-1)^i q_i.
		\end{align*}
		
	\end{proof}

	Using the volume polynomial as an input gives a way to interpret the sign of the gamma vector in terms of Segre classes associated to direct sums of line bundles or embeddings of linear subspaces of affine space in the setting of Example \ref{gamvolmult}. We also give some connections to questions to combinatorial positivity questions.  
	
	\begin{exmp} \textbf{(Gamma vectors for volume polynomials and characteristic classes)} \label{gamseg} \\
		As for a possible geometric interpretation of $\frac{Q(u)}{(1 + u)^{d - 2r - 1}}$ from \[ r \gamma_r = [u^{r - 1}] \frac{Q(u)}{(1 + u)^{d - 2r + 1}}, \]  note that $A^{k + 1} B^{d - k - 1} - A^k B^{d - k} = A^k B^{d - k - 1}(A - B)$. In the setting of Example \ref{gamvolmult} (e.g. for divisors on projective space), this means that we can formally take $\rho ``='' A B^{-1}$ and set $A^{k + 1} B^{d - k - 1} - A^k B^{d - k} = B^{d - 1}(A - B) \rho^k$. Since $q_k = d \binom{d - 1}{k} (A^{k + 1} B^{d - k - 1} - A^k B^{d - k})$  this would formally give an expression of the form \[ r \gamma_r \text{``='' } [u^{r - 1}] \frac{d B^{d - 1}(A - B)}{(1 + u)^{d - 2r + 1} (1 - \rho u)^{d - 1}}. \]
		
		The expression inside for $\frac{Q(u)}{(1 + u)^{d - 2r + 1}}$ has a similar shape to the Segre class of a direct sum of line bundles. A ``motivic'' Chern class associated to the inclusion of a linear subspace into affine space has a term that is similar to the denominator with $-t$ in place of $u$ and a parameter $y$ replacing $\rho$ (Example 5.6 on p. 244 of \cite{DM}). \\
		
		If $-\rho$ is treated like the first Chern class of a globally generated line bundle, we can think of $r \gamma_r$ as the degree $r - 1$ part $s_{r - 1}(\mathcal{E})$ of the Segre class of a direct sum $\mathcal{E}$ of globally generated line bundles (e.g. normal bundle of a subvariety given by the complete intersection of the corresponding divisors). Given a globally generated vector bundle $\mathcal{E}$, note that $(-1)^i s_i(\mathcal{E})$ is represented by a positive linear combination of components of a codimension $i$ subvariety where $r + i - 1$ general sections of $\mathcal{E}$ fail to generate $\mathcal{E}$ (p. 363 -- 364 of \cite{EH}). This is sort of analogous to the signs $(-1)^{r - 1}$ we obtained for the gamma vector components $\gamma_r$ when consecutive terms of the form $A^k B^{d - k}$ and $A^{k + 1} B^{d - k - 1}$ have opposite signs (e.g .when $A$ is a negative multiple of $B$). Finally, we note that connections between Segre classes and Lorentzian polynomials have been studied recently by Aluffi \cite{A}. More specifically, we have that terms of the form $\frac{Q(u)}{(1 + u)^{d - 2r + 1}}$ coming from Segre zeta functions have differences of terms that are log concave. There are always gamma vectors such that this property is satisfied. Given a polynomial $Q$ of degree $\deg Q \le d - 1$, there is always some $B$ such that $\frac{B(u)}{( 1 + u)^{d + 1}} du = \int \frac{Q(u)}{(1 + u)^{d + 1}} du$. Apart from this, the perspective of taking characteristic classes (without the degree restrictions implicit above) leads to connections with Schur positivity when the divisors involved are nef by work of Fulton--Lazarsfeld \cite{FL} and its generalization to nef divisors by Demailly--Peternell--Schneider \cite{DPS}. 
 	\end{exmp}


\begin{thebibliography}{30}
		
		\bibitem{AHK} K. Adiprasito, J. Huh, and E. Katz, Hodge theory for combinatorial geometries, Annals of Mathematics 188(2) (2018), 381 -- 432.
		
		\bibitem{AW} S. Ahmad and V. Welker, Chain Blockers and Convoluted Catalan Numbers, Order 33 (2016), 347 -- 358.
		
		\bibitem{A} P. Aluffi, Lorentzian polynomials, Segre classes, and adjoint polynomials of convex polyhedral cones, Advances in Mathematics 437 (2024) 109440, 1 -- 37.
		
		\bibitem{ADH2} F. Ardila, G. Denham, and J. Huh, Lagrangian combinatorics of matroids, Algebraic Combinatorics 6(2) (2023), 387 -- 411. 
		
		\bibitem{ADH1} F. Ardila, G. Denham, and J. Huh, Lagrangian geometry of matroids, Journal of the American Mathematical Society 36(3) (2023), 727 -- 794.
		
		
		\bibitem{Athgam} C. A. Athanasiadis, Gamma-positivity in combinatorics and geometry, S\'eminaire Lotharingien de Combinatoire 77 (2018), 1 -- 64.
		
		\bibitem{Bia} P. Biane, Characters of symmetric groups and free cumulants, Asymptotic Combinatorics with Applications to Mathematical Physics: A European Mathematical Summer School held at the Euler Institute, St. Petersburg, Russia, July 9 -- 20, 2001, Berlin, Heidelberg: Springer Berlin Heidelberg (2003), 185 -- 200. 
		
		\bibitem{BGGJS} S. Billey, I. Goulden, C. Greene, D. Jackson, and R. Stanley, Recent Advances in Algebraic and Enumerative Combinatorics (2003), \url{ https://www.birs.ca/workshops/2003/03w5009/report03w5009.pdf}
		
		\bibitem{BiH} S. Birdsong and G. Hetyei, The Toric $h$-Vector of a Cubical Complex in Terms of Noncrossing Partition Statistics, Annals of Combinatorics 18 (2014), 585 -- 606.
		
		\bibitem{BR} D. Bowman and A. Regev, Counting symmetry classes of dissections of a convex regular polygon, Advances in Applied Mathematics 56 (2014), 35 -- 55. 
		
		
		
		\bibitem{BC} J. I. Brown and C. J. Colbourn, Roots of the reliability polynomial, SIAM Journal of Discrete Mathematics 5 (4) (1992), 571 -- 585.
		
		\bibitem{BH} W. Bruns and H. J. Herzog, Cohen-Macaulay rings, Cambridge University Press 39 (1998).
		
		\bibitem{Bry} T. Brylawski, The broken-circuit complex, Transactions of the American Mathematical Society 234(2) (1977), 417 -- 433.
		
		
		
		\bibitem{CAS} F. Castillo and J. A. Samper, Finiteness theorems for matroid complexes with prescribed topology, European Journal of Combinatorics 92 (2021), 1 -- 17. 
		
		
		
		\bibitem{Cha} M. K. Chari, Two decompositions in topological combinatorics with applications to matroid complexes, Transactions of the American Mathematical Society 349 (10) (1997), 3925 -- 3943.
		
		\bibitem{DPS} J.-P. Demailly, T. Peternell, and M. Schneider, Compact complex manifolds with numerically effective tangent bundles, Journal of Algebraic Geometry 3 (1994), 295 -- 345.
		
		\bibitem{DM} B. Dirks and M. Musta\c{t}\u{a}, The Hilbert series of Hodge ideals of hyperplane arrangements, Journal of Singularities 20 (2020), 232 -- 250. 
		
		
		\bibitem{EH} D. Eisenbud and J. Harris, 3264 and All That -- A Second Course in Algebraic Geometry, Cambridge University Press (2016).
		
		
		
		\bibitem{W} W. Fulton, Intersection Theory, Second Edition, Springer Science + Business Media New York (1998). 
		
		\bibitem{FL} W. Fulton and R. Lazarsfeld, Positive polynomials for ample vector bundles, Annals of Mathematics 118 (1983), 35 -- 60.
		
		\bibitem{Gal} S. R. Gal, Real Root Conjecture Fails for Five- and Higher-Dimensional Spheres, Discrete \& Computational Geometry 34 (2005), 269 -- 284. 
		
		
		\bibitem{Ges} I. M. Gessel, Lagrange inversion, Journal of Combinatorial Theory, Series A 144 (2016), 212 -- 249.
		
		\bibitem{GL} I. Gessel and P. J. Lacrombe, A forgotten convolution type identity of Catalan: two hypergeometric proofs, Util. Math. 59 (2001), 97 -- 109.
		
		
		
		\bibitem{JKL} M. Juhnke--Kubitzke and D. V. Le, Flawlessness of $h$-vectors and broken circuit complexes, International Mathematics Research Notices 5 (2018), 1347 -- 1367.
		
		
		
		\bibitem{LV} M. Las Vergnas, The Tutte polynomial of a morphism of matroids -- 5. Derivatives as generating functions of Tutte activities, European Journal of Combinatorics 34 (2013), 1390 -- 1405.
		
		
		\bibitem{Laz} R. Lazarsfeld, Positivity in Algebraic Geometry, I. Classical Setting: Line Bundles and Linear Series, Springer (2003).
		
		\bibitem{Ni} A. Nica, How non-crossing partitions occur in free probability theory, Talk at AIM (2005), \url{http://www.aimath.org/WWN/braidgroups/nica.pdf}
		
		\bibitem{NS} A. Nica and R. Speicher, Commutators of free random variables, Duke Mathematical Journal 92 (1998), 553 -- 592.
		
		\bibitem{Reg} A. Regev, A proof of Catalan's Convolution formula, Integers 12(5) (2012), 929 -- 934.
		
		
		
		\bibitem{StCat} R. P. Stanley, Catalan Combinatorics, Cambridge University Press (2015).
		
		\bibitem{St} R. P. Stanley, Combinatorics and commutative algebra (Second Edition), Progress in Mathematics 41, Birkh\"auser Boston (1996).
		
	
		
		\bibitem{StEC2} R. P. Stanley, Enumerative Combinatorics -- Volume 2, Cambridge University Press (2001) (2004 digital printing). 
		
		\bibitem{Stfh} R. P. Stanley, $f$-vectors and $h$-vectors of simplicial posets, Journal of Pure and Applied Algebra 71 (1991), 319 -- 331.
		
		\bibitem{Stem} J. R. Stembridge, Eulerian numbers, tableaux, and the Betti numbers of a toric variety, Discrete Mathematics 99 (1992), 307 -- 320.
		
		\bibitem{SW} E. Surya and L. Warnke, Lagrange Inversion Formula by Induction, American Mathematical Monthly 130 (2023), 944 -- 948, \url{https://arxiv.org/pdf/2305.17576.pdf}
		
		\bibitem{Swg} E. Swartz, $g$-elements of matroid complexes, Journal of Combinatorial Theory, Series B 88 (2) (2006), 1305 -- 1320.
		
		\bibitem{Swl} E. Swartz, Lower bounds for $h$-vectors of $k$-CM, independence, and broken circuit complexes, SIAM Journal of Discrete Mathematics 18(3) (2005), 647 -- 661. 
		
		
		
		\bibitem{Ted} S. Tedford, Combinatorial interpretations of convolutions of the Catalan numbers, Integers 11(1), 35 -- 45.
		
		
	\end{thebibliography}
\end{document}